\documentclass[final,3p,times]{elsarticle}

\usepackage[T1]{fontenc}
\usepackage[utf8]{inputenc}
\usepackage{microtype}
\usepackage{amsmath,amssymb,mathtools,amsthm}
\usepackage{enumitem}
\usepackage{xcolor}
\usepackage[colorlinks=true,linkcolor=blue!45!black,citecolor=blue!45!black,urlcolor=blue!45!black]{hyperref}

\biboptions{sort&compress}
\allowdisplaybreaks
\setlist[enumerate]{leftmargin=2.2em}
\setlist[itemize]{leftmargin=2em}

\newtheorem{theorem}{Theorem}[section]
\newtheorem{proposition}[theorem]{Proposition}
\newtheorem{lemma}[theorem]{Lemma}
\newtheorem{corollary}[theorem]{Corollary}
\theoremstyle{definition}
\newtheorem{definition}[theorem]{Definition}
\theoremstyle{remark}
\newtheorem{remark}[theorem]{Remark}
\newtheorem{question}{Problem}

\newcommand{\FS}{\mathbf{FS}}
\newcommand{\RB}{\mathbf{RB}}
\newcommand{\Fin}{\operatorname{Fin}}
\newcommand{\id}{\operatorname{id}}
\newcommand{\ev}{\operatorname{ev}}
\newcommand{\const}{\operatorname{const}}
\newcommand{\Up}{\mathord{\Uparrow}}
\newcommand{\Down}{\mathord{\Downarrow}}
\newcommand{\Disc}{\mathsf{Disc}}

\newcommand{\up}{\mathord{\uparrow}}
\newcommand{\down}{\mathord{\downarrow}}

\journal{Preprint}

\begin{document}

\begin{frontmatter}

\title{A function space characterization of RB-domains}

\author[addr1]{Yuxu Chen}
\address[addr1]{School of Mathematics, Sichuan University, Chengdu, P.R. China 610065}
\ead{chenyuxu@scu.edu.cn}

\begin{abstract}
We prove that a domain $D$ is an RB-domain if and only if the function space
$[D\to E]$ is continuous for every domain $E$, thereby resolving a conjecture
of Luan and Li.  
More precisely, for each domain $D$ we construct an explicit algebraic test domain $A_D$ such that continuity of the single function space $[D\to A_D]$ already forces $D$ to be an RB-domain.  
For countably based domains $D$, the test domain can be chosen as a fixed domain that is independent of $D$.
The analogous continuity characterization of FS-domains is false: the closed-disk domain is an FS-domain, but its function space into a suitable pointed algebraic domain is not continuous.  
\end{abstract}

\begin{keyword}
function space \sep RB-domain \sep FS-domain \sep continuous domain
\MSC[2020] 06B35 \sep 06B30 \sep 68Q55
\end{keyword}

\end{frontmatter}

\section{Introduction}

For domains $D$ and $E$, the pointwise ordered set $[D\to E]$ of
Scott-continuous maps is always a dcpo, but it need not be continuous even
when both $D$ and $E$ are continuous.  This is a basic obstruction to using
all continuous domains as a higher-order semantic category
\cite[Section~4.3]{AbramskyJung1994}.  A central theme in domain theory is
therefore to identify natural classes whose approximation structure is
preserved by products and function spaces.  Plotkin's bifinite domains give
the classical algebraic example \cite{Plotkin1976,Smyth1983}.  In the
continuous setting, Jung introduced FS-domains and proved their Cartesian
closure \cite{Jung1989,Jung1990}; see also
\cite[Section~4.2]{AbramskyJung1994}.  RB-domains, equivalently the
Scott-continuous retracts of bifinite domains, form a subclass of the
FS-domains \cite[Definition~2.2 and Lemma~2.4]{ZouLiGuo2018}.

The main problem considered here is whether an RB-domain can
be recognized from its outgoing function spaces.  Miao, Luan, Liu and Li
proved that $[D\to E]$ is continuous whenever $D$ is an RB-domain and $E$ is
a domain \cite{MiaoEtAl2023}.  Luan and Li asked whether the converse
holds and gave a positive answer when $D$ is algebraic, in which case
RB-domains are precisely the bifinite domains~\cite[Section~4, Theorem~4.6]{LuanLi2025}.  
The case of arbitrary continuous domains was left open.  Our main result, stated and proved as
Theorem~\ref{thm:rb-characterization}, answers this question.  It shows that, for every domain $D$,
\[
 D\in\RB
 \quad\Longleftrightarrow\quad
 [D\to E]\text{ is continuous for every domain }E.
\]
More precisely, from the order and the way-below relation of $D$ we construct
an explicit algebraic domain $A_D$ such that continuity of the single
function space $[D\to A_D]$ already implies that $D$ is an RB-domain.

For countably based cases, the test target can be chosen independently of
the source of the RB-domains.  Theorem~\ref{thm:uniform-countable-test} constructs a fixed
countably based algebraic domain $A_\omega$ such that, for every countably
based domain $D$,
\[
 D\in\RB
 \quad\Longleftrightarrow\quad
 [D\to A_\omega]\text{ is continuous}.
\]
For pointed countably based domains, the same role is played by a fixed
pointed countably based algebraic domain $U_\omega$.

The analogous continuity statement for FS-domains is false, since the latest result shows that $\Disc$, 
Lawson's planar closed-disk domain~\cite{AbramskyJung1994, Lawson2008} is an FS-domain but not an RB-domain~\cite{ChenKouLyu2026}. Therefore $[\Disc \to E_\Disc]$ is not conitnuous. We give out concrete calculation to show that how the constructed pointed algebraic target $E_\Disc$ destroys the continuity of $[\Disc \to E_\Disc]$.  
The proof combines the finite-support mechanism above with the finite-image Lorentz-flux theorem in \cite[Proposition~5.3]{ChenKouLyu2026}.

\section{Preliminaries}\label{sec:preliminaries}

We use standard domain-theoretic terminology as in
\cite{AbramskyJung1994,GierzEtAl2003}.  A
\emph{domain} is a nonempty continuous dcpo, and directed subsets are
understood to be nonempty.  For a set $A$, let $\Fin(A)$ denote the finite
subsets of $A$, including $\varnothing$.  We write
\(
  \Up x=\{y:x\ll y\},
  \Down x=\{y:y\ll x\}.
\)
A subset $B$ of a domain $X$ is a basis if, for every $x\in X$, the set
$B\cap\Down x$ is directed with supremum $x$.  An element $k$ of a dcpo is compact if $k\ll k$.  
A dcpo is algebraic if every element is the supremum of the compact elements below it. A dcpo is pointed if it has a least element.
A domain is countably based if it has a countable basis.  For dcpos $X$ and $Y$, the function space
$[X\to Y]$ is the dcpo of
Scott-continuous maps, ordered pointwise; directed suprema are computed
pointwise.  The notation $\ll_{\mathcal O(X)}$ denotes the way-below relation
in the lattice of Scott-open subsets of $X$.

A directed family $\mathcal F\subseteq[X\to X]$ with
$\bigvee\mathcal F=\id_X$ is called an approximate identity. Note that we use the unpointed convention and no idempotence condition is imposed on a deflation.

\begin{definition}\cite{AbramskyJung1994}\label{def:fs-rb}
Let $X$ be a domain. 
\begin{enumerate}[label=\textup{(\alph*)}]
\item A Scott-continuous map $f:X\to X$ is finitely separated from
$\id_X$ if there is a finite set $M\subseteq X$ such that, for every
$x\in X$, some $m\in M$ satisfies
\(
  f(x)\leq m\leq x.
\)
The domain $X$ is an \textit{FS-domain} if it has an approximate identity consisting
of maps finitely separated from the identity.
\item A deflation on $X$ is a Scott-continuous map $d:X\to X$ with finite
image and $d\leq\id_X$.  The domain $X$ is an \textit{RB-domain} if it has an
approximate identity consisting of deflations.
\end{enumerate}
\end{definition}

 Every deflation is finitely separated by its image, and hence $\RB\subseteq\FS$. 
A pointed domain $L$ is an \emph{$L$-domain} if every principal ideal
$\down x$ is a complete lattice.  A domain is \emph{coherent} if the
intersection of any two Scott-compact saturated sets is Scott compact.
Every pointed domain is compact in its Scott topology.  Consequently,
for pointed domains, coherence is equivalent to Lawson compactness
\cite[Proposition~4.2.20]{AbramskyJung1994}.  
We also use the following standard results.
\begin{proposition}\label{prop:fact}
  \begin{enumerate}[label=\textup{(\roman*)}]
\item If $D$ is an RB-domain and $E$ is a domain, then $[D\to E]$ is
continuous \cite{MiaoEtAl2023}.
\item If $D$ is a coherent domain and $E$ is a pointed RB-domain, then
$[D\to E]$ is an RB-domain
\cite[Corollary~4.9]{XiXuLawson2016}.  
\item If $D$ is a domain, $E$ is a pointed domain, and $[D\to E]$ is
continuous, then either $D$ is coherent or $E$ is an $L$-domain
\cite[Lemma~4.3.1]{AbramskyJung1994}.
\item If $D$ and $E$ are arbitrary domains and $[D\to E]$ is continuous,
then either $D$ is compact in its Scott topology or $E$ is a disjoint union
of pointed domains \cite[Lemma~4.3.6]{AbramskyJung1994}.
\end{enumerate}
\end{proposition}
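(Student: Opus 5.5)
These four facts are standard results that the paper quotes, so the plan is to verify each one by checking the hypotheses against the cited source rather than to reprove it. The only adaptation needed is our unpointed convention for deflations (Definition~\ref{def:fs-rb}(b)), which affects items (i) and (ii). For each item I would first confirm that the source uses the same notions of domain, RB-domain, coherence and $L$-domain as Section~\ref{sec:preliminaries}, and then quote the result.

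For (i), I would recall the mechanism from \cite{MiaoEtAl2023}. Let $(d_i)$ be an approximate identity of deflations on $D$ and let $f\in[D\to E]$.
\begin{itemize}
\item Each $f\circ d_i$ depends only on the finitely many values $f(m)$, $m\in d_i(D)$.
\item Choose $e\ll f(m)$ for each such $m$ and $m'\ll m$ in $D$. Suprema of step functions $\bigvee (\Up m'\searrow e)$ built from these choices lie way below $f$ in $[D\to E]$.
\item The key point is that the finite image of $d_i$ makes the family of such approximants directed with supremum $f$. The argument must not use a least element, so that it works when $E$ is unpointed.
\end{itemize}
For (ii), I would cite \cite[Corollary~4.9]{XiXuLawson2016} directly. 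The only check is that a pointed RB-domain in their sense agrees with ours. This holds because, for pointed $E$, one can always add the bottom to the image of a deflation.

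For (iii) and (iv), I would invoke \cite[Lemmas~4.3.1 and~4.3.6]{AbramskyJung1994}, which are Jung's dichotomy arguments. For (iii), suppose $D$ is not coherent and $E$ is not an $L$-domain. Then there are two compact saturated sets in $D$ whose intersection is not compact, and an element of $E$ with two minimal upper bounds below it. From these one builds a map $f$ that has no directed family of way-below approximants. For (iv), suppose $D$ is not Scott compact. Then constant maps into a component of $E$ without a least element cannot be approximated from way below.

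The main obstacle is consistency of conventions in (i) and (ii), namely pointed versus unpointed deflations. I would check this first. Items (iii) and (iv) are verbatim citations.
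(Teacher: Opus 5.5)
Your plan matches the paper. Proposition~\ref{prop:fact} is not proved there at all; it is only quoted from \cite{MiaoEtAl2023}, \cite{XiXuLawson2016} and \cite{AbramskyJung1994}, so checking conventions and citing is all that is required. The convention check in (ii) is simpler than you suggest: $d\le\id$ forces $d(\bot)=\bot$, so the bottom already lies in the image of every deflation of a pointed domain.

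The mechanism you recall for (i), however, is wrong and would fail if you relied on it. There are three problems:
\begin{enumerate}
\item A step function $(U\searrow e)$ needs a least element of $E$, which you yourself say must be avoided.
\item Finite joins of step functions need not exist in $[D\to E]$ when $E$ is an arbitrary domain.
\item $(\Up m'\searrow e)$ with $m'\ll m$ and $e\ll f(m)$ is not even below $f$, because $\Up m'$ contains points $x\not\geq m$ at which $f(x)$ need not dominate $e$.
\end{enumerate}

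The argument that works uses the deflations directly. Let $d$ be a deflation with finite image $M$. Choose a monotone $h:M\to E$ with $h(m)\ll f(m)$ for every $m\in M$. To do this, go through $M$ along a linear extension, using that $\Down f(m)$ is directed and contains $h(m_1)$ for every $m_1\leq m$ already treated.

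Then $h\circ d$ is Scott continuous, because $d$ is constant on a cofinal part of every directed set. Moreover $h\circ d\ll f$. Indeed, if $f\leq\bigvee_i g_i$ with $(g_i)$ directed, a single index $i$ satisfies $h(m)\leq g_i(m)$ for the finitely many $m\in M$. Then $h(d(x))\leq g_i(d(x))\leq g_i(x)$ because $d(x)\leq x$.

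Now repeat the construction for a deflation $d''\geq d,d'$, choosing $h''(n)$ above all $h(m)$ and $h'(m')$ with $m,m'\leq n$. This shows the maps $h\circ d$ form a directed family. Finally, $f(x)=\bigvee_d f(d(x))$ together with interpolation in $E$ shows that its supremum is $f$. This route uses neither a least element nor joins in $E$.
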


For a domain $E$, being a disjoint union of pointed domains is equivalent to
every element of $E$ lying above a unique minimal element.

\section{Function-space characterization of RB-domains}\label{sec:rb}

Miao et al.~\cite{MiaoEtAl2023} proved that if $D$ is an RB-domain and
$E$ is any domain, then the function space $[D\to E]$ is continuous.
Luan and Li~\cite{LuanLi2025} established the converse for algebraic
domains and left open whether the algebraicity assumption can be removed.
We will show that the converse is indeed true.
Fix a domain $X$, we construct an algebraic test domain $A_X$.  
The continuity of $[X\to A_X]$ provides the finite
approximation needed to construct finite-image deflations on $X$.

\subsection{The algebraic test domain}

Let
\(
 \mathcal{D}(X)
 =\{\varnothing\}\cup
   \{A\subseteq X:A\text{ is a directed subset of }X\}
\) and let $\mathcal{P}(X)$ be the powerset of $X$.
Define
\[
 E_X=
 \bigl(\mathcal{D}(X)\times\{0\}\bigr)
 \cup
 \bigl(\mathcal P(X)\times\{1\}\bigr),
\]
ordered componentwise:
\(
 (A,i)\leq(B,j)
 \Longleftrightarrow
 A\subseteq B\text{ and }i\leq j.
\)
When the second coordinate is $0$, the first coordinate must be directed or
empty.  When the second coordinate is $1$, the first coordinate may be an
arbitrary subset of $X$.

\begin{lemma}\label{lem:EX-algebraic}
The poset $E_X$ is a pointed algebraic domain. Its least and greatest
elements are $(\varnothing,0)$ and $(X,1)$, respectively.  The following
compact elements form a basis:
\[
 \mathcal K_X={}
 \{(F,0):F=\varnothing\text{ or }F\subseteq X
                    \text{ is finite and directed}\}
 \cup\{(M,1):M\in\Fin(X)\}.
\]
\end{lemma}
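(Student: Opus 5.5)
We will check the four ingredients of the lemma directly from the definition of $E_X$: that it is a dcpo, what its extreme elements are, which elements are compact, and that $\mathcal K_X$ is a basis.

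\emph{Extreme elements and directed suprema.} For every $(A,i)\in E_X$ we have $\varnothing\subseteq A\subseteq X$ and $0\leq i\leq 1$, and both $(\varnothing,0)$ and $(X,1)$ belong to $E_X$. Hence they are the least and greatest elements.

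For directed completeness, let $\{(A_s,i_s)\}_{s\in S}$ be directed in $E_X$ and put $A=\bigcup_s A_s$ and $i=\max_s i_s$. If $i=1$, then $(A,1)\in E_X$ because every subset of $X$ is allowed in the second layer. If $i=0$, then every $A_s$ is empty or directed. Given $a\in A_s$ and $b\in A_t$, choose $u$ with $(A_s,0),(A_t,0)\leq(A_u,0)$; then $a,b\in A_u$, and since $A_u$ is directed they have an upper bound in $A_u\subseteq A$. So $A$ is empty or directed, and $(A,0)\in E_X$. In both cases $(A,i)$ is clearly the least upper bound, because the order is inclusion in the first coordinate together with the order $0\leq 1$.

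\emph{Compactness.} Take $(F,i)\in\mathcal K_X$ and suppose $(F,i)\leq\bigvee_s(A_s,i_s)=(A,i')$. Since $F$ is finite and the family $\{A_s\}$ is directed under inclusion, $F$ is contained in a single $A_s$. If $i=1$, then some $i_t=1$, and directedness gives an index $u$ above both $s$ and $t$ with $F\subseteq A_u$ and $i_u=1$. If $i=0$, the index $s$ already works. Hence $(F,i)\ll(F,i)$.

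\emph{Basis.} For $(A,i)\in E_X$ we consider the set $\mathcal K_X\cap\down(A,i)$.
\begin{itemize}
\item When $i=1$, this set contains all $(M,1)$ with $M\in\Fin(A)$. These are directed under union and have supremum $(A,1)$.
\item When $i=0$ and $A=\varnothing$, the element $(\varnothing,0)$ is itself compact.
\item When $i=0$ and $A$ is directed, the elements below are the $(F,0)$ with $F\subseteq A$ finite and directed.
\end{itemize}
The main point is the last case, since a finite directed set need not be closed under unions. The resolution is that a finite directed set is exactly a finite set containing its own greatest element. For finite $F\subseteq A$, directedness of $A$ gives an upper bound $a\in A$ of $F$, and $F\cup\{a\}$ is finite and directed with greatest element $a$. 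Consequently any two members $(F,0),(G,0)$ of $\mathcal K_X\cap\down(A,0)$ are both below $(F\cup G\cup\{a\},0)$, which is again in the set. Moreover every singleton $\{x\}$ with $x\in A$ occurs, so the union of all members is $A$ and the supremum is $(A,0)$.

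Finally, since every element of $E_X$ is the directed supremum of compact elements of $\mathcal K_X$ below it, $E_X$ is algebraic and $\mathcal K_X$ is a basis. Conversely, every compact element must lie in $\mathcal K_X$: it is the directed supremum of such elements, so it equals one of them. In particular $E_X$ is a pointed algebraic domain.
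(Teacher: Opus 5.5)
Your proof is correct and follows essentially the same route as the paper. Both arguments compute directed suprema as the union of first coordinates and the maximum of second coordinates, get compactness of each element of $\mathcal K_X$ from finiteness plus directedness, and get the basis property from the finite directed subsets $F\cup G\cup\{a\}$ of a directed $A$ (resp.\ all finite subsets when the second coordinate is $1$). Your closing remark that every compact element already lies in $\mathcal K_X$ is a correct extra. In the case $(A,1)$, the elements $(F,0)$ below it are dominated by $(F,1)$, so your directed subfamily is cofinal; both proofs use this implicitly.
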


\begin{proof}
Let $((A_i,\varepsilon_i))_{i\in I}$ be a directed family in $E_X$.
If $\varepsilon_i=0$ for every $i$, then $\bigcup_iA_i$ is empty or
directed.  Indeed, if $a\in A_i$ and $b\in A_j$, choose $k\in I$ above
$i$ and $j$.  Then $a,b\in A_k$, and the directedness of $A_k$ supplies
$c\in A_k$ with $a,b\leq c$.  If some $\varepsilon_i$ is $1$, the first
coordinate of the supremum need not be directed.  In either case the
supremum is
\[
 \bigvee_{i\in I}(A_i,\varepsilon_i)
 =
 \begin{cases}
  (\bigcup_iA_i,0),&\varepsilon_i=0\text{ for every }i,\\[1mm]
  (\bigcup_iA_i,1),&\varepsilon_i=1\text{ for some }i.
 \end{cases}
\]
The same description also shows that $(\varnothing,0)$ and $(X,1)$ are,
respectively, the least and greatest elements of $E_X$.

We next verify compactness.  The least element $(\varnothing,0)$ is
compact.  Suppose
$(F,0)\leq\bigvee_i(A_i,\varepsilon_i)$, where $F$ is nonempty, finite and
directed.  For every $a\in F$, choose $i_a$ with $a\in A_{i_a}$; since
$F$ is finite and the indexing family is directed, one index $i$ lies above
all the $i_a$.  Then $F\subseteq A_i$, and hence
$(F,0)\leq(A_i,\varepsilon_i)$.  If
$(M,1)\leq\bigvee_i(A_i,\varepsilon_i)$, choose an index $i_0$ whose
second coordinate is $1$, and for each $m\in M$ choose $i_m$ with
$m\in A_{i_m}$.  A common upper index $i$ satisfies
$M\subseteq A_i$ and $\varepsilon_i=1$, so
$(M,1)\leq(A_i,1)$.  Thus every element of $\mathcal K_X$ is compact.

Let $A\subseteq X$ be directed.  Its finite directed subsets form a
directed family under inclusion.  Indeed, if $F$ and $G$ are two such
subsets, directedness of $A$ gives an element $a\in A$ above the finite set
$F\cup G$, and $F\cup G\cup\{a\}$ is a finite directed subset of $A$
containing both.  Their union is $A$, and therefore
\(
 (A,0)=\bigvee\{(F,0):F\subseteq A\text{ is finite and directed}\}.
\)
The case $A=\varnothing$ is immediate.  When the second coordinate is $1$, the finite subsets
of an arbitrary $A\subseteq X$ are directed by finite union and have union
$A$, so
\(
 (A,1)=\bigvee\{(M,1):M\subseteq A\text{ is finite}\}.
\)
Hence every element is the supremum of compact elements below it, and
$E_X$ is algebraic.
\end{proof}

Define $Q:X\to E_X$ by
$Q(x)=(\Down x,0).$
Thus the first coordinate of $Q(x)$ is the directed set of all way-below
approximants of $x$.

\begin{lemma}\label{lem:Q-general}
The map $Q$ is Scott continuous.
\end{lemma}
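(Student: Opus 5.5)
We first check that $Q$ is well defined and monotone, and then that it preserves directed suprema. For every $x\in X$ the set $\Down x$ is directed (it is nonempty with supremum $x$, since $X$ is continuous). Hence $(\Down x,0)$ lies in $\mathcal D(X)\times\{0\}\subseteq E_X$. Monotonicity is immediate: if $x\le y$ and $z\ll x$, then $z\ll y$, so $\Down x\subseteq\Down y$ and $Q(x)\le Q(y)$.

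For Scott continuity, let $S\subseteq X$ be directed with $s=\bigvee S$. Then $(Q(t))_{t\in S}$ is a directed family in $E_X$ whose second coordinates are all $0$. By the description of suprema in the proof of Lemma~\ref{lem:EX-algebraic},
\[
 \bigvee_{t\in S}Q(t)=\Bigl(\bigcup_{t\in S}\Down t,\,0\Bigr).
\]
Monotonicity gives $\bigcup_{t\in S}\Down t\subseteq\Down s$, so it remains to prove the reverse inclusion.

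This reverse inclusion is the only real step, and it uses the interpolation property of the way-below relation in a continuous domain. Let $z\ll s=\bigvee S$. Interpolation gives $w$ with $z\ll w\ll s$. Since $w\ll\bigvee S$ and $S$ is directed, the definition of $\ll$ yields some $t\in S$ with $w\le t$. Then $z\ll w\le t$ gives $z\ll t$, so $z\in\Down t$. Hence $\Down s=\bigcup_{t\in S}\Down t$, and $Q(\bigvee S)=\bigvee_{t\in S}Q(t)$.

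Without interpolation one only gets $z\le t$ for some $t\in S$, which is not enough to conclude $z\ll t$. Continuity of $X$ is exactly what supplies the interpolating element $w$.
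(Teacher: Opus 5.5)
Your proof is correct and follows essentially the same route as the paper: monotonicity, then the supremum formula from Lemma~\ref{lem:EX-algebraic} for second coordinate $0$, then interpolation to get $\Down\bigl(\bigvee S\bigr)\subseteq\bigcup_{t\in S}\Down t$. Your explicit check that each $\Down x$ is directed, so that $Q$ lands in $E_X$, is a small addition the paper leaves implicit.
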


\begin{proof}
If $x\leq y$, then $\Down x\subseteq\Down y$, so $Q$ is monotone.  Let
$D\subseteq X$ be directed.  The inclusion
\(
 \bigcup_{x\in D}\Down x\subseteq\Down\bigl(\bigvee D\bigr)
\)
is immediate.  Conversely, if $u\ll\bigvee D$, interpolation gives
$w$ with $u\ll w\ll\bigvee D$.  Since $w\ll\bigvee D$, there is
$x\in D$ with $w\leq x$, and then $u\ll x$.  Hence
\(
 \Down\bigl(\bigvee D\bigr)=\bigcup_{x\in D}\Down x.
\)
All values $Q(x)$ have second coordinate $0$, so the directed-supremum formula in
Lemma~\ref{lem:EX-algebraic} now gives
$Q(\bigvee D)=\bigvee_{x\in D}Q(x)$.  Thus $Q$ is Scott continuous.
\end{proof}

We use the standard step function of Erker--Escard\'o--Keimel~\cite{ErkerEscardoKeimel1998}.  Let $Y$ be a pointed dcpo, let $k$ be compact in $Y$, let $f:X\to Y$ be Scott continuous,
and let $V$ be Scott open.  Define
\[
 s_{V,k}(x)=
 \begin{cases}
  k,&x\in V,\\
  \bot_Y,&x\notin V.
 \end{cases}
\]
Since $V$ is Scott open, the map $s_{V,k}$ is Scott continuous.

\begin{lemma}\label{lem:standard-step}
Let $X$ be a dcpo and $Y$ a pointed dcpo.  Let $V\in\mathcal O(X)$,
let $k$ be a compact element of $Y$, and let $f\in[X\to Y]$.
If
\(
V\ll_{\mathcal O(X)}f^{-1}(\up k),
\)
then $s_{V,k}\ll f$ in $[X\to Y]$.
\end{lemma}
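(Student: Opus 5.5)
We verify the definition of the way-below relation in $[X\to Y]$ directly, using that directed suprema in $[X\to Y]$ are computed pointwise.

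Let $\mathcal G\subseteq[X\to Y]$ be directed with $f\le\bigvee\mathcal G$. We must find $g\in\mathcal G$ with $s_{V,k}\le g$. Outside $V$ there is nothing to check, since $s_{V,k}(x)=\bot_Y\le g(x)$ for every $g$. So the whole task is to find a single $g\in\mathcal G$ with $k\le g(x)$ for all $x\in V$, that is, with $V\subseteq g^{-1}(\up k)$.

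First, since $k$ is compact, $\up k$ is Scott open in $Y$. Hence each $U_g:=g^{-1}(\up k)$ is Scott open in $X$ for $g\in\mathcal G$. The family $\{U_g:g\in\mathcal G\}$ is directed under inclusion, because $g\le g'$ implies $U_g\subseteq U_{g'}$ and $\mathcal G$ is directed.

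Next, we show that this family covers $f^{-1}(\up k)$. Let $x\in f^{-1}(\up k)$. Then
\[
k\le f(x)\le\bigl(\bigvee\mathcal G\bigr)(x)=\bigvee_{g\in\mathcal G}g(x),
\]
and the right-hand side is a directed supremum in $Y$. Compactness of $k$ gives some $g$ with $k\le g(x)$, that is, $x\in U_g$. Thus
\[
f^{-1}(\up k)\subseteq\bigcup_{g\in\mathcal G}U_g .
\]
This union of a directed family of Scott-open sets is the directed supremum of that family in $\mathcal O(X)$.

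Finally, the hypothesis $V\ll_{\mathcal O(X)}f^{-1}(\up k)$ says that $V$ is way-below $f^{-1}(\up k)$ in $\mathcal O(X)$. Since $f^{-1}(\up k)$ is below the directed union $\bigcup_{g\in\mathcal G}U_g$, there is a single $g\in\mathcal G$ with $V\subseteq U_g$. This $g$ satisfies $s_{V,k}\le g$ pointwise: on $V$ we have $k\le g(x)$, and off $V$ we have $\bot_Y\le g(x)$.

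There is no real obstacle in this argument. The only point needing care is the middle step, where compactness of $k$ converts the pointwise inequality against a directed supremum into the open cover $\bigcup_g U_g$ of $f^{-1}(\up k)$. That cover is what lets the way-below relation in $\mathcal O(X)$ be applied.
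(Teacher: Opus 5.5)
Your proposal is correct and follows the paper's proof essentially step for step. You form the directed Scott-open family $g^{-1}(\up k)$ and use compactness of $k$ to show it covers $f^{-1}(\up k)$. You then apply $V\ll_{\mathcal O(X)}f^{-1}(\up k)$ to get a single $g$ with $V\subseteq g^{-1}(\up k)$, which yields $s_{V,k}\le g$.
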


\begin{proof}
Let $(g_i)_{i\in I}$ be a directed family in $[X\to Y]$ such that
$f\leq \bigvee_{i\in I}g_i$.  For each $i\in I$, put
\(
W_i=g_i^{-1}(\up k).
\)
Since $k$ is compact, $\up k$ is Scott open, so each $W_i$ is
Scott open.  Moreover, the family $(W_i)_{i\in I}$ is directed under
inclusion.

We claim that
\(
f^{-1}(\up k)\subseteq\bigcup_{i\in I}W_i.
\)
Indeed, if $x\in f^{-1}(\up k)$, then
\(
k\leq f(x)\leq\bigvee_{i\in I}g_i(x).
\)
Since $k$ is compact and $(g_i(x))_{i\in I}$ is directed, there exists
$i\in I$ such that $k\leq g_i(x)$.  Hence $x\in W_i$.

Because
\(
V\ll_{\mathcal O(X)}f^{-1}(\up k),
\)
there exists $i\in I$ such that $V\subseteq W_i$.  Thus
$s_{V,k}(x)\leq g_i(x)$ for every $x\in X$: if $x\in V$, then
$k\leq g_i(x)$, while if $x\notin V$, then
$\bot_Y\leq g_i(x)$.  Therefore $s_{V,k}\leq g_i$, and hence
$s_{V,k}\ll f$.
\end{proof}

For $u\ll v$ in $X$, we have
$\Up v\subseteq\up v\subseteq\Up u$.  The principal upset
$\up v$ is Scott compact.  Hence every directed Scott-open cover whose
union contains $\Up u$ has one member containing $\up v$, and therefore
one member containing $\Up v$.  Thus
\(\Up v\ll_{\mathcal O(X)}\Up u\).
Define
\[
 S_{u,v}(x)=
 \begin{cases}
  (\{u\},0),&v\ll x,\\
  (\varnothing,0),&v\not\ll x.
 \end{cases}
\]
The element $(\{u\},0)$ is compact in $E_X$, and
\(
 Q^{-1}\bigl(\up(\{u\},0)\bigr)= \Up u.
\)
This yields the following consequence.

\begin{lemma}\label{lem:requirements-waybelow}
Let $X$ be a domain, and let $E_X$, $Q:X\to E_X$, and
$S_{u,v}:X\to E_X$ be defined as above.  If $u,v\in X$ satisfy
$u\ll v$, then \(S_{u,v}\ll Q\) in the function space $[X\to E_X]$.
\end{lemma}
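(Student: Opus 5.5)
The plan is to recognize $S_{u,v}$ as a standard step function and then apply Lemma~\ref{lem:standard-step} with $Y=E_X$, $f=Q$, $k=(\{u\},0)$ and $V=\Up v$. First, $S_{u,v}=s_{\Up v,(\{u\},0)}$. This requires three facts. The set $\Up v$ is Scott open in the domain $X$, by interpolation. $E_X$ is pointed with $\bot_{E_X}=(\varnothing,0)$, by Lemma~\ref{lem:EX-algebraic}. And $(\{u\},0)$ is compact, since $\{u\}$ is a finite directed set and so lies in $\mathcal K_X$.

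Next, I verify the identity $Q^{-1}\bigl(\up(\{u\},0)\bigr)=\Up u$ claimed before the lemma. Since $Q(x)=(\Down x,0)$, we have $(\{u\},0)\leq Q(x)$ iff $\{u\}\subseteq\Down x$. This holds iff $u\ll x$, which is exactly $x\in\Up u$.

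The remaining hypothesis is $\Up v\ll_{\mathcal O(X)}\Up u$, argued as in the paragraph preceding the lemma. Take a directed family $(W_i)$ of Scott-open sets with $\Up u\subseteq\bigcup_i W_i$. Because $u\ll v$, the point $v$ lies in $\Up u$, so $v\in W_i$ for some $i$. As $W_i$ is an upper set, $\up v\subseteq W_i$, and hence $\Up v\subseteq\up v\subseteq W_i$. Here the compactness of $\up v$ is not even needed beyond this one-point argument.

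With all hypotheses in place, Lemma~\ref{lem:standard-step} gives $S_{u,v}\ll Q$ in $[X\to E_X]$. Scott continuity of $Q$ is supplied by Lemma~\ref{lem:Q-general}. I expect no real obstacle; the only point needing care is checking that $\Up v$ is Scott open, which rests on interpolation in the continuous domain $X$. This is what makes $S_{u,v}$ Scott continuous and thus a legitimate element of the function space.
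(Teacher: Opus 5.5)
Your proposal is correct and follows the paper's proof. You identify $S_{u,v}$ as the step function $s_{\Up v,(\{u\},0)}$, verify $Q^{-1}(\up(\{u\},0))=\Up u$ and $\Up v\ll_{\mathcal O(X)}\Up u$, and then apply Lemma~\ref{lem:standard-step}; your one-point argument for $\Up v\ll_{\mathcal O(X)}\Up u$ is just the paper's appeal to the Scott compactness of $\up v$ written out explicitly.
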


\begin{proof}
We have
$\Up v\ll_{\mathcal O(X)} \Up u$ and
$Q^{-1}(\up(\{u\},0))=\Up u$.  Since $(\{u\},0)$ is compact in
$E_X$, Lemma~\ref{lem:standard-step} applied to
$V=\Up v$, $k=(\{u\},0)$ and $f=Q$ yields
$S_{u,v}\ll Q$.
\end{proof}

For $M\in\Fin(X)$, let $K_M:X\to E_X$ be the constant function with value
$(M,1)$.  The family $(K_M)_{M\in\Fin(X)}$ is directed and
\( \bigvee_{M\in\Fin(X)}K_M \)
is the constant function with value $(X,1)$.  Since $Q$ lies below this
constant top function, every $\varphi\ll Q$ satisfies
$\varphi\leq K_M$
for some finite $M\subseteq X$.
The following lemma turns this finite directed information into a deflation.

\begin{lemma}\label{lem:deflation-extraction}
Let $X$ be a domain, let $M$ be a finite subset of $X$, and let
$\varphi:X\to E_X$ be Scott continuous. Suppose that
$\varphi\leq Q$ and $\varphi\leq K_M$. For each $x\in X$, write
$\varphi(x)=(B_\varphi(x),0)$. Then
\(
B_\varphi(x)\in\mathcal D(X)
\ \text{and}\
B_\varphi(x)\subseteq M\cap\Down x.
\)
If $B_\varphi(x)$ is nonempty for every $x\in X$, then
\(
d_\varphi(x)=\max B_\varphi(x)
\)
defines a deflation on $X$ whose image is contained in $M$.
Moreover, if $\psi:X\to E_X$ satisfies the same assumptions and
$\varphi\leq\psi$, then $d_\varphi\leq d_\psi$.
\end{lemma}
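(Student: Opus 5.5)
Since $\varphi\leq Q$ and every value $Q(x)=(\Down x,0)$ has second coordinate $0$, the order on $E_X$ forces the second coordinate of $\varphi(x)$ to be $0$. Hence $\varphi(x)=(B_\varphi(x),0)$ with $B_\varphi(x)\in\mathcal D(X)$ by the definition of $E_X$, and $B_\varphi(x)\subseteq\Down x$. From $\varphi\leq K_M$ we get $B_\varphi(x)\subseteq M$. This settles the first claim.

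Assume now that every $B_\varphi(x)$ is nonempty. Then $B_\varphi(x)$ is a nonempty finite directed subset of $X$, so it has a greatest element. Indeed, an upper bound in $B_\varphi(x)$ of all its finitely many elements exists by directedness and is the maximum. Thus $d_\varphi(x)=\max B_\varphi(x)$ is well defined and lies in $M$, so $d_\varphi$ has finite image. Moreover, $d_\varphi(x)\in\Down x$ gives $d_\varphi(x)\ll x$, hence $d_\varphi\leq\id_X$.

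The substantive step is Scott continuity of $d_\varphi$; I will prove monotonicity and preservation of directed suprema.

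\emph{Monotonicity.} If $x\leq y$, monotonicity of $\varphi$ gives $B_\varphi(x)\subseteq B_\varphi(y)$. The maximum of a finite directed set dominates all its elements, so $\max B_\varphi(x)\leq\max B_\varphi(y)$.

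\emph{Directed suprema.} Let $D\subseteq X$ be directed with supremum $s$. Continuity of $\varphi$ together with the supremum formula of Lemma~\ref{lem:EX-algebraic} (all second coordinates are $0$) gives
\[
B_\varphi(s)=\bigcup_{x\in D}B_\varphi(x).
\]
The element $d_\varphi(s)$ belongs to this union, so $d_\varphi(s)\in B_\varphi(x)$ for some $x\in D$. Then $d_\varphi(s)\leq d_\varphi(x)\leq\bigvee_{y\in D}d_\varphi(y)$. The reverse inequality follows from monotonicity, so $d_\varphi(s)=\bigvee_{y\in D}d_\varphi(y)$. Note that only finitely many values occur here, so no further limit argument is required.

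Finally, suppose $\psi$ satisfies the same assumptions and $\varphi\leq\psi$. Then $B_\varphi(x)\subseteq B_\psi(x)$ for every $x$. Since $d_\psi(x)$ is the maximum of the directed set $B_\psi(x)$, it dominates $d_\varphi(x)\in B_\varphi(x)$, so $d_\varphi\leq d_\psi$.

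The only point requiring care is the directed-supremum step above. It goes through because suprema in $E_X$ at second coordinate $0$ are computed as unions, so the maximum at $s$ is already attained at some element of $D$.
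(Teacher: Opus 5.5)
Your proof is correct and follows the paper's argument essentially step for step. You read off the second coordinate and the inclusions from $\varphi\leq Q$ and $\varphi\leq K_M$, take the maximum of a finite directed set, get monotonicity from $B_\varphi(x)\subseteq B_\varphi(y)$, and obtain Scott continuity from the union formula $B_\varphi(\bigvee D)=\bigcup_{y\in D}B_\varphi(y)$. The one cosmetic difference is that the paper phrases the last step as $d_\varphi$ being eventually constant on $D$, whereas you sandwich $d_\varphi(\bigvee D)$ between $\bigvee_{y\in D}d_\varphi(y)$ and itself.
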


\begin{proof}
Since $\varphi(x)\leq Q(x)=(\Down x,0)$, the second coordinate of
$\varphi(x)$ is $0$ and
$B_\varphi(x)\subseteq\Down x$.  The inequality
$\varphi(x)\leq(M,1)$ gives $B_\varphi(x)\subseteq M$.  This proves the stated inclusions.  If $B_\varphi(x)$ is nonempty, then it is a finite
directed set and therefore has a greatest element.  Hence
$d_\varphi$ is well defined, has image contained in $M$, and satisfies
$d_\varphi(x)\in B_\varphi(x)\subseteq\Down x.$
In particular $d_\varphi(x)\ll x$, so $d_\varphi\leq\id_X$.

The map $d_\varphi$ is monotone.  Indeed, $x\leq y$ and the monotonicity of
$\varphi$ imply
$B_\varphi(x)\subseteq B_\varphi(y)$, and hence
$d_\varphi(x)\leq d_\varphi(y)$.  To prove Scott continuity, let
$D\subseteq X$ be directed and put $x=\bigvee D$.  Since $\varphi$ is
Scott continuous and all its values have second coordinate $0$,
$B_\varphi(x)=\bigcup_{y\in D}B_\varphi(y).$
The element $d_\varphi(x)$ belongs to this union, so it belongs to
$B_\varphi(y_0)$ for some $y_0\in D$.  
Since $d_\varphi(x)\in B_\varphi(y_0)$ and
$d_\varphi(y_0)$ is the greatest element of $B_\varphi(y_0)$, we have
$d_\varphi(x)\leq d_\varphi(y_0)$.  Monotonicity and $y_0\leq x$
give the reverse inequality.
Hence $d_\varphi(y_0)=d_\varphi(x)$, and
monotonicity then gives $d_\varphi(y)=d_\varphi(x)$ for every
$y\in D$ above $y_0$.  It follows that
$d_\varphi\bigl(\bigvee D\bigr)=\bigvee_{y\in D}d_\varphi(y).$
Consequently $d_\varphi$ is a deflation.

Finally, if $\varphi\leq\psi$, then
$B_\varphi(x)\subseteq B_\psi(x)$ for every $x$.  Taking greatest elements
gives $d_\varphi(x)\leq d_\psi(x)$, and hence
$d_\varphi\leq d_\psi$.
\end{proof}

\begin{proposition}\label{prop:finite-cover-extraction}
Let $X$ be a domain.  Suppose that $[X\to E_X]$ is continuous and
that there are finitely many pairs
$u_i\ll v_i \ (1\leq i\leq m)$
such that
$X=\Up v_1\cup\cdots\cup\Up v_m.$
Then $X$ is an RB-domain.
\end{proposition}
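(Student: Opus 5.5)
The plan is to exhibit the approximate identity of deflations directly as the image, under $\varphi\mapsto d_\varphi$, of the set of approximants of $Q$ that are nonempty everywhere. Let
\[
 \mathcal W=\{\varphi\in[X\to E_X]:\varphi\ll Q\}.
\]
Continuity of $[X\to E_X]$ makes $\mathcal W$ directed with $\bigvee\mathcal W=Q$.

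Every $\varphi\in\mathcal W$ satisfies $\varphi\leq Q$. By the remark preceding Lemma~\ref{lem:deflation-extraction}, it also satisfies $\varphi\leq K_M$ for some $M\in\Fin(X)$. Hence Lemma~\ref{lem:deflation-extraction} applies to $\varphi$ as soon as $B_\varphi(x)\neq\varnothing$ for all $x$.

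To force nonemptiness, I use the finite cover. By Lemma~\ref{lem:requirements-waybelow}, $S_{u_i,v_i}\ll Q$ for each $i$. Since $\mathcal W$ is directed, choose $\varphi_0\in\mathcal W$ above $S_{u_1,v_1},\dots,S_{u_m,v_m}$. (Alternatively, use that $\Down Q$ is directed and the $S_{u_i,v_i}$ lie in it.) For every $x$, some $i$ has $v_i\ll x$, so $u_i\in B_{\varphi_0}(x)$. Put
\[
 \mathcal W_0=\{\varphi\in\mathcal W:\varphi\geq\varphi_0\},
\]
which is cofinal in $\mathcal W$, hence directed with supremum $Q$. Each $\varphi\in\mathcal W_0$ has $B_\varphi(x)\supseteq B_{\varphi_0}(x)\neq\varnothing$. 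Therefore $d_\varphi$ is a deflation, and by the monotonicity clause of Lemma~\ref{lem:deflation-extraction} the family $\{d_\varphi:\varphi\in\mathcal W_0\}$ is directed.

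It remains to show that $\bigvee_{\varphi\in\mathcal W_0}d_\varphi=\id_X$. Since $d_\varphi\leq\id_X$, it suffices to show that for each $x$ and each $w\ll x$ there is $\varphi$ with $w\leq d_\varphi(x)$; as $x=\bigvee\Down x$, this suffices. Pointwise, $\bigcup_{\varphi\in\mathcal W_0}B_\varphi(x)=\Down x$ because the supremum is computed pointwise and all second coordinates are $0$. So $w\in B_\varphi(x)$ for some $\varphi$, and then $w\leq\max B_\varphi(x)=d_\varphi(x)$. Hence the $d_\varphi$ form an approximate identity of deflations, and $X$ is an RB-domain.

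The main obstacle I expect is the nonemptiness of $B_\varphi(x)$ uniformly in $x$; that is exactly where the finite cover by the sets $\Up v_i$ is needed. The rest is bookkeeping with the directed-supremum formula of Lemma~\ref{lem:EX-algebraic}.
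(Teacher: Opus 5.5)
Your proof is correct and follows the paper's argument essentially step for step. You use the same $\varphi_0$ above the finitely many $S_{u_i,v_i}$, the same cofinal family of approximants of $Q$, and the same appeal to Lemma~\ref{lem:deflation-extraction} for nonemptiness, the deflation property and directedness. The only difference is the final step. You read off $\bigcup_{\varphi\in\mathcal W_0}B_\varphi(x)=\Down x$ directly from $\bigvee\mathcal W_0=Q$ and the pointwise directed-supremum formula. The paper instead interpolates $u\ll v\ll x$ and uses $S_{u,v}\ll Q$ together with cofinality, which needs only directedness of $\Down Q$ and not that its supremum is $Q$.
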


\begin{proof}
Because $[X\to E_X]$ is continuous, $\Down Q$ is directed and has
supremum $Q$.  By Lemma~\ref{lem:requirements-waybelow}, each $S_{u_i,v_i}$ belongs to $\Down Q$, so directedness
gives some $\varphi_0\ll Q$ such that
$S_{u_i,v_i}\leq\varphi_0$ for all $1\leq i\leq m.$
Let
$\mathcal D=\{\varphi\ll Q:\varphi_0\leq\varphi\}.$
The set $\mathcal D$ is directed.  It is also cofinal in $\Down Q$: given
$\psi\ll Q$, directedness of $\Down Q$ supplies
$\varphi\ll Q$ above both $\psi$ and $\varphi_0$.

Fix $\varphi\in\mathcal D$.  Since $\varphi\ll Q$, one has
$\varphi\leq Q$. Since $Q \leq \bigvee_{M\in \Fin(X)}K_M$, there exists a finite
$M\subseteq X$ with $\varphi\leq K_M$.  Write
$\varphi(x)=(B_\varphi(x),0)$.  For each $x\in X$, choose $i$ with
$v_i\ll x$.  Then
$(\{u_i\},0)=S_{u_i,v_i}(x) \leq\varphi_0(x)\leq\varphi(x),$
so $u_i\in B_\varphi(x)$.  Thus every $B_\varphi(x)$ is nonempty, and
Lemma~\ref{lem:deflation-extraction} yields a deflation $d_\varphi$.
If $\varphi,\psi\in\mathcal D$, choose $\chi\in\mathcal D$ above both.
The last assertion of Lemma~\ref{lem:deflation-extraction} gives
$d_\varphi,d_\psi\leq d_\chi$.  Hence
$(d_\varphi)_{\varphi\in\mathcal D}$ is directed. 

Since all $d_\varphi$ are all deflations, it remains to compute their pointwise supremum.  
Let $x\in X$ and $u\ll x$. By interpolation, choose $v\in X$ such that
$u\ll v\ll x$. Since $S_{u,v}\ll Q$ and the family $\mathcal D$ is
cofinal below $Q$, there exists $\varphi\in\mathcal D$ with
$S_{u,v}\leq\varphi$. Evaluating at $x$ gives
\(
(\{u\},0)=S_{u,v}(x)\leq\varphi(x)=(B_\varphi(x),0).
\)
Hence $u\in B_\varphi(x)$, and therefore
$u\leq d_\varphi(x)\leq\bigvee_{\psi\in\mathcal D}d_\psi(x)$.
Since this holds for every $u\ll x$ and $X$ is continuous,
\(
x=\bigvee\Down x
\leq
\bigvee_{\psi\in\mathcal D}d_\psi(x).
\)
The reverse inequality follows from $d_\varphi\leq\id_X$ for every
$\varphi$.  Thus
$\bigvee_{\varphi\in\mathcal D}d_\varphi=\id_X,$
so $X$ is an RB-domain.
\end{proof}

\begin{corollary}\label{cor:pointed-test}
If $X$ is a pointed domain, then $[X\to E_X]$ is continuous if and only if
$X$ is an RB-domain.
\end{corollary}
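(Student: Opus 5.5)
One direction is immediate from Proposition~\ref{prop:fact}(i): if $X$ is an RB-domain, then $[X\to E]$ is continuous for every domain $E$. In particular this holds for $E=E_X$, which is a domain by Lemma~\ref{lem:EX-algebraic}. No pointedness is needed for this direction.

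For the converse, I would assume that $X$ is pointed with least element $\bot$ and that $[X\to E_X]$ is continuous. The plan is to reduce directly to Proposition~\ref{prop:finite-cover-extraction} by exhibiting a single pair $u_1\ll v_1$ with $X=\Up v_1$. The natural choice is $u_1=v_1=\bot$. First, $\bot\ll\bot$ holds in any pointed dcpo, because every directed set whose supremum lies above $\bot$ is nonempty and each of its members lies above $\bot$. Second, $\Up\bot=X$, since $\bot\ll x$ for every $x$: any directed $D$ with $x\le\bigvee D$ is nonempty, and each element of $D$ is above $\bot$. With $m=1$, the hypotheses of Proposition~\ref{prop:finite-cover-extraction} are therefore satisfied, and that proposition yields that $X$ is an RB-domain.

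The only step needing any care is checking that the degenerate pair $(\bot,\bot)$ is admissible in the earlier machinery. Lemma~\ref{lem:requirements-waybelow} requires only $u\ll v$, and $\Up\bot=\up\bot=X$ is Scott compact, so $S_{\bot,\bot}$ is the constant map with value $(\{\bot\},0)$ and satisfies $S_{\bot,\bot}\ll Q$. This guarantees $\bot\in B_\varphi(x)$ for all $x$, which gives the nonemptiness condition used in Lemma~\ref{lem:deflation-extraction}. I therefore expect no real obstacle; the content lies entirely in Proposition~\ref{prop:finite-cover-extraction}. The difficulty for general domains, where no finite cover by sets $\Up v_i$ is available a priori, is deferred to the later results, presumably via Proposition~\ref{prop:fact}(iv) together with a modified test domain.
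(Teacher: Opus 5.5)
Your proposal is correct and matches the paper's proof. The paper also takes the RB $\Rightarrow$ continuity direction from Proposition~\ref{prop:fact}(i), and the converse from Proposition~\ref{prop:finite-cover-extraction} applied to the single pair $u=v=\bot_X$, using $\bot_X\ll\bot_X$ and $\Up\bot_X=X$.
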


\begin{proof}
The forward implication follows from
Proposition~\ref{prop:finite-cover-extraction} with the single pair
$u=v=\bot_X$, since $\bot_X\ll\bot_X$ and $\Up\bot_X=X$.  The reverse
implication is Proposition~\ref{prop:fact} (i).
\end{proof}

In the following, we will use $E_X$ combined with a three-point domain to construct the final test domain.
Let $V=\{\ell,r,\top\}$, ordered by $\ell<\top$ and $r<\top$,
with $\ell$ and $r$ incomparable.
The finite poset $V$ is an algebraic domain.  Its two minimal elements are
$\ell$ and $r$, while $\top$ lies above both.  Consequently, $V$ is not a
disjoint union of pointed domains.

Define
$A_X=E_X\times V.$
Since finite products of algebraic dcpos are algebraic,
Lemma~\ref{lem:EX-algebraic} shows that $A_X$ is an algebraic domain.  It is
not a disjoint union of pointed domains: the two distinct minimal elements
$((\varnothing,0),\ell) \ \text{and}\ ((\varnothing,0),r)$
both lie below $((\varnothing,0),\top)$.

\begin{theorem}\label{thm:universal-target}
Let $X$ be a domain. If $[X\to A_X]$ is continuous, then $X$ is an RB-domain.
\end{theorem}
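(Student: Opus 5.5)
The plan is to reduce to Proposition~\ref{prop:finite-cover-extraction}. Two things are needed: continuity of $[X\to E_X]$, and a finite cover $X=\Up v_1\cup\dots\cup\Up v_m$ with $u_i\ll v_i$. Both will come from the continuity of $[X\to A_X]$ together with the structure of $A_X=E_X\times V$.

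\emph{Step 1: continuity transfers to $[X\to E_X]$.} The projection $\pi:A_X\to E_X$ is Scott continuous. It has a Scott-continuous section $\sigma(e)=(e,\top)$ with $\pi\circ\sigma=\id_{E_X}$. Post-composition therefore makes $[X\to E_X]$ a Scott-continuous retract of $[X\to A_X]$, via $f\mapsto\sigma\circ f$ and $g\mapsto\pi\circ g$. Both of these maps preserve directed suprema, since suprema in function spaces are pointwise. A continuous retract of a continuous dcpo is continuous, so $[X\to E_X]$ is continuous.

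\emph{Step 2: $X$ is Scott compact.} By Proposition~\ref{prop:fact}(iv), continuity of $[X\to A_X]$ forces either that $X$ is compact in its Scott topology or that $A_X$ is a disjoint union of pointed domains. The latter fails, as already observed: $((\varnothing,0),\ell)$ and $((\varnothing,0),r)$ are distinct minimal elements lying below a common element. Hence $X$ is Scott compact.

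\emph{Step 3: the finite cover.} Since $X$ is continuous, the sets $\Up v$ for $v\in X$ cover $X$ and are Scott open, because $x=\bigvee\Down x$ and $\Down x$ is nonempty. Using interpolation, choose for each $x$ elements $u\ll v\ll x$ (take $v\ll x$, then $u\ll v$). Then the family $\{\Up v: v\in X\}$, with each $v$ admitting some $u\ll v$, is an open cover. By compactness finitely many $\Up v_1,\dots,\Up v_m$ cover $X$. For each $i$ we pick $u_i\ll v_i$, which is possible since $\Down v_i$ is nonempty. Proposition~\ref{prop:finite-cover-extraction} then applies and gives that $X$ is an RB-domain.

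The only step requiring any care is Step 1, specifically checking that the retraction argument is legitimate in the unpointed setting. Two points need verifying: that $\sigma$ is Scott continuous, which holds because the product order makes $e\mapsto(e,\top)$ preserve directed suprema, and that retracts of continuous dcpos are continuous. Both are standard. The rest is a direct combination of Proposition~\ref{prop:fact}(iv) with Proposition~\ref{prop:finite-cover-extraction}.
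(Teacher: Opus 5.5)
Your proposal is correct and follows the paper's proof essentially step for step. You exhibit $[X\to E_X]$ as a Scott-continuous retract of $[X\to A_X]$, obtain Scott compactness of $X$ from Proposition~\ref{prop:fact}(iv), extract a finite cover by sets $\Up v_i$ with $u_i\ll v_i$, and conclude by Proposition~\ref{prop:finite-cover-extraction}; the only difference is your section $e\mapsto(e,\top)$ where the paper uses $e\mapsto(e,\ell)$, and either choice works.
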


\begin{proof}
Let $p_E:A_X\to E_X$ be the first-coordinate projection and define
$s_E:E_X\longrightarrow A_X, \ s_E(e)=(e,\ell).$
Then $p_E\circ s_E=\id_{E_X}$.
Define
\[
S:[X\to E_X]\longrightarrow[X\to A_X],
\quad
S(f)=s_E\circ f,
\]
and
\[
P:[X\to A_X]\longrightarrow[X\to E_X],
\quad
P(g)=p_E\circ g.
\]
Both maps are Scott continuous because directed suprema in the two
function spaces are computed pointwise and $s_E$ and $p_E$ preserve directed
suprema.  Since $p_E\circ s_E=\id_{E_X}$, for every $f\in[X\to E_X]$ we
have
\[
(P\circ S)(f)
 =p_E\circ s_E\circ f
 =f.
\]
Thus $P\circ S=\id_{[X\to E_X]}$, so $[X\to E_X]$ is a
Scott-continuous retract of $[X\to A_X]$.  Since $[X\to A_X]$ is
continuous and Scott-continuous retracts of continuous dcpos are
continuous, $[X\to E_X]$ is continuous.
By Proposition~\ref{prop:fact} (iv), the continuity of
$[X\to A_X]$ implies that either $X$ is compact in its Scott topology
or $A_X$ is a disjoint union of pointed domains.  The latter alternative
has already been ruled out.  Therefore $X$ is Scott compact.
For each $x\in X$, continuity and interpolation provide elements
$u\ll v\ll x$.  Thus the Scott-open sets
$\Up v \ (u\ll v\text{ for some }u\in X)$
cover $X$.  Scott compactness yields finitely many pairs
$u_i\ll v_i$ such that
$X=\Up v_1\cup\cdots\cup\Up v_m.$
Proposition~\ref{prop:finite-cover-extraction}, applied to the already
established continuity of $[X\to E_X]$, now shows that $X$ is an RB-domain.
\end{proof}

\begin{theorem}\label{thm:rb-characterization}
For every domain $X$, the following conditions are equivalent:
\begin{enumerate}[label=\textup{(\roman*)}]
\item $X$ is an RB-domain;
\item $[X\to E]$ is continuous for every domain $E$.
\end{enumerate}
Moreover, the algebraic domain $A_X$ constructed above has the property that
continuity of $[X\to A_X]$ already implies that $X$ is an RB-domain.
\end{theorem}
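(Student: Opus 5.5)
The plan is to assemble the equivalence from results already established, with Theorem~\ref{thm:universal-target} carrying essentially all the weight.

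For (i)$\Rightarrow$(ii), I invoke Proposition~\ref{prop:fact}~(i) directly. It is the result of Miao et al.\ stating that $[X\to E]$ is continuous whenever $X$ is an RB-domain and $E$ is any domain, so nothing further needs to be checked.

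For (ii)$\Rightarrow$(i), I specialize (ii) to the single target $E=A_X$. First I confirm that $A_X$ is a legitimate domain. It is $E_X\times V$, where $E_X$ is a pointed algebraic domain by Lemma~\ref{lem:EX-algebraic} and $V$ is a finite poset, hence algebraic. A finite product of algebraic dcpos is algebraic, and it is nonempty, so $A_X$ is a domain. Hypothesis (ii) then gives that $[X\to A_X]$ is continuous, and Theorem~\ref{thm:universal-target} shows that $X$ is an RB-domain. The ``moreover'' clause is exactly the statement of Theorem~\ref{thm:universal-target}, so it needs no separate argument.

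There is no real obstacle here; the substantive work has already been done in Theorem~\ref{thm:universal-target} and Proposition~\ref{prop:finite-cover-extraction}. The only points to watch are two hygiene checks. The first is that $A_X$ satisfies the paper's definition of a domain, namely a nonempty continuous dcpo. The second is that the unpointed convention for RB-domains used in Definition~\ref{def:fs-rb} matches the one under which Proposition~\ref{prop:fact}~(i) was proved, so that both directions refer to the same class $\RB$.
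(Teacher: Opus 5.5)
Your proposal is correct and matches the paper's proof. The paper also cites Miao--Luan--Liu--Li (Proposition~\ref{prop:fact}~(i)) for (i)$\Rightarrow$(ii), and for (ii)$\Rightarrow$(i) and the single-target clause it specializes to the algebraic domain $E=A_X$ and applies Theorem~\ref{thm:universal-target}.
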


\begin{proof}
The implication (i)$\Rightarrow$(ii) is the theorem of
Miao--Luan--Liu--Li~\cite{MiaoEtAl2023}.  Conversely, assume (ii).  The
algebraic domain $A_X$ is a domain, so $[X\to A_X]$ is continuous;
Theorem~\ref{thm:universal-target} then yields (i).  The same theorem gives
the final, single-target assertion.
\end{proof}

\subsection{A uniform test domain for countably based domains}
\label{sec:countable-uniform-test}

The target $A_X$ is built from the underlying set and order of $X$.  For
countably based domains, this dependence can be removed by placing all finite
order patterns into one countable algebraic domain.

For a poset $P$, let $\operatorname{Idl}(P)$ denote the dcpo of nonempty
directed lower subsets of $P$, ordered by inclusion.
A \emph{finite rooted order} on $\mathbb N$ is a triple $\alpha=(F_\alpha,\preccurlyeq_\alpha,t_\alpha)$, where $F_\alpha$ is a nonempty finite subset of $\mathbb N$, $\preccurlyeq_\alpha$ is a partial order on $F_\alpha$, and $t_\alpha$ is its greatest element.  
Let $\mathcal R_\omega$ be the set of all finite rooted orders on $\mathbb N$.
Define a poset $P_\omega$ whose elements are $\bot_P$, the members of $\mathcal R_\omega$, and symbols $u_M$ for $M\in\Fin(\mathbb N)$.  The element $\bot_P$ is least, and the remaining order relations are
\[
 \alpha\leq\beta
 \quad\Longleftrightarrow\quad
 F_\alpha\subseteq F_\beta
 \text{ and }
 \preccurlyeq_\alpha\subseteq\preccurlyeq_\beta,
\]
\[
 \alpha\leq u_M
 \quad\Longleftrightarrow\quad
 F_\alpha\subseteq M,
 \qquad
 u_M\leq u_N
 \quad\Longleftrightarrow\quad
 M\subseteq N.
\]
There are no other comparabilities.

\begin{lemma}\label{lem:Pomega}
The poset $P_\omega$ is countable and directed.  Its ideal completion
$U_\omega=\operatorname{Idl}(P_\omega)$ is a pointed countably based
algebraic domain.
\end{lemma}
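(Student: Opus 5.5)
The plan is to verify the claims in order: countability, then a partial order, then directedness, and finally invoke the standard facts about ideal completions.

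\emph{Countability.} Every $\alpha\in\mathcal R_\omega$ is determined by a finite subset $F_\alpha\subseteq\mathbb N$, a relation $\preccurlyeq_\alpha\subseteq F_\alpha\times F_\alpha$ and a point $t_\alpha\in F_\alpha$. There are countably many such data, so $\mathcal R_\omega$ is countable. The same holds for $\{u_M:M\in\Fin(\mathbb N)\}$, and hence $P_\omega$ is countable.

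\emph{Partial order.} Reflexivity is clear. For antisymmetry on $\mathcal R_\omega$, suppose $F_\alpha=F_\beta$ and $\preccurlyeq_\alpha=\preccurlyeq_\beta$. Then the greatest elements coincide, so $t_\alpha=t_\beta$ and $\alpha=\beta$. No $u_M$ lies below any $\alpha$, so no antisymmetry issue arises between the two kinds of elements. For transitivity, I will check the mixed cases. If $\alpha\le\beta\le u_M$, then $F_\alpha\subseteq F_\beta\subseteq M$, so $\alpha\le u_M$. If $\alpha\le u_M\le u_N$, then $F_\alpha\subseteq M\subseteq N$. The cases involving $\bot_P$ are trivial.

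\emph{Directedness.} Take two elements of $P_\omega$; I will produce a common upper bound, discarding $\bot_P$ since it lies below everything. Every element lies below some $u_M$: for $\alpha$, take $M=F_\alpha$. Given $u_M$ and $u_N$, the element $u_{M\cup N}$ is an upper bound. Hence $u_{F_\alpha\cup F_\beta\cup M\cup N}$ bounds any pair. Note that two rooted orders need not have a rooted upper bound, since their union may fail to be antisymmetric or to have a top. The $u_M$'s are exactly what make $P_\omega$ directed, and this is the only point that needs care.

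\emph{Ideal completion.} I will use the standard fact that for any poset $P$ the ideal completion $\operatorname{Idl}(P)$ is an algebraic dcpo. Directed suprema are unions, the compact elements are the principal ideals $\down p$, and every ideal is the directed union of the principal ideals it contains. The compact elements therefore form a basis in bijection with $P_\omega$, which is countable, so $U_\omega$ is countably based. Since $P_\omega$ has least element $\bot_P$, the ideal $\{\bot_P\}=\down\bot_P$ is contained in every nonempty lower set, so $U_\omega$ is pointed. $U_\omega$ is nonempty, so it is a domain. The directedness of $P_\omega$ additionally makes $P_\omega$ itself an ideal, which is the top element of $U_\omega$; this is presumably why it is recorded, for use later in the paper, in the same way the constant maps $K_M$ were used for $E_X$.
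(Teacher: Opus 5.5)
Your proof is correct and follows the paper's argument essentially step for step: you check the order axioms (antisymmetry via uniqueness of the greatest element, plus the mixed transitivity cases), get directedness from the upper bounds $u_M$, count the elements, and take the principal ideals as a countable compact basis with $\down\bot_P$ as least element. One harmless slip in your aside: the union lacking a top does not prevent a rooted upper bound, because an upper bound may adjoin a fresh element of $\mathbb N$ as its top, so only failure of antisymmetry (after transitive closure) is a genuine obstruction.
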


\begin{proof}
The displayed clauses define a partial order.  For example, if
$\alpha\leq\beta\leq u_M$, then
$F_\alpha\subseteq F_\beta\subseteq M$, so $\alpha\leq u_M$.
Antisymmetry among rooted orders follows because a finite partial order has
at most one greatest element.  Every finite subset of $P_\omega$ has an
upper bound $u_M$, where $M$ is the union of all supports and all finite
sets occurring in that subset.  Hence $P_\omega$ is directed.  It is
countable because it is built from finite subsets and finite relations on
$\mathbb N$.  The principal ideals form a countable compact basis of its
ideal completion, which is pointed because $P_\omega$ has a least element.
\end{proof}

Let $X$ be a countably based domain. Choose a countable basis
\(
B=\{b_n:n\in L\},
\)
where $L\subseteq\mathbb N$ and the elements $b_n$ are pairwise distinct.
A rooted order
$\alpha=(F_\alpha,\preccurlyeq_\alpha,t_\alpha)$ is called
$B$-admissible if $F_\alpha\subseteq L$ and
\(
n\preccurlyeq_\alpha m
\ \Longrightarrow\
b_n\leq b_m
\)
for all $n,m\in F_\alpha$.
For each $x\in X$, define
\[
Q_B(x)
=
\{\bot_P\}
\cup
\bigl\{
\alpha\in\mathcal R_\omega:
\alpha\text{ is $B$-admissible and }b_{t_\alpha}\ll x
\bigr\}.
\]
For $p\in P_\omega$, write
\(
\widehat p
=
\downarrow_{P_\omega}p
=
\{q\in P_\omega:q\leq p\}.
\)
Then $\widehat p$ is the principal ideal generated by $p$, and hence a
compact element of
$U_\omega=\operatorname{Idl}(P_\omega)$.

\begin{lemma}\label{lem:QB-uniform}
Let $X$ be a countably based domain. For every $x\in X$, the set $Q_B(x)$ is an ideal of $P_\omega$, and
$Q_B:X\to U_\omega$ is Scott continuous.
\end{lemma}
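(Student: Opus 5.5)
To prove Lemma~\ref{lem:QB-uniform}, I will first show that $Q_B(x)$ is a nonempty directed lower subset of $P_\omega$. Then I will prove that $Q_B$ is monotone and preserves directed suprema, using the fact that in $\operatorname{Idl}(P_\omega)$ directed suprema are unions of ideals.

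\emph{Nonempty and lower.} Nonemptiness is immediate since $\bot_P\in Q_B(x)$. For the lower-set property, take $p\le\alpha$ with $\alpha\in Q_B(x)$. By the definition of the order, an element below a rooted order is either $\bot_P$ or a rooted order $\beta\le\alpha$, because no $u_M$ lies below any $\alpha$. In the second case $F_\beta\subseteq F_\alpha\subseteq L$ and ${\preccurlyeq_\beta}\subseteq{\preccurlyeq_\alpha}$, so $\beta$ is $B$-admissible. Since $t_\beta\in F_\alpha$, we have $t_\beta\preccurlyeq_\alpha t_\alpha$, and admissibility of $\alpha$ gives $b_{t_\beta}\le b_{t_\alpha}\ll x$. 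Hence $b_{t_\beta}\ll x$ and $\beta\in Q_B(x)$.

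\emph{Directed.} This is the main step. Given $\alpha,\beta\in Q_B(x)$ (the case of $\bot_P$ is trivial), I construct an upper bound $\gamma\in Q_B(x)$. Since $b_{t_\alpha},b_{t_\beta}\ll x$ and $B\cap\Down x$ is directed, there is $n\in L$ with $b_{t_\alpha},b_{t_\beta}\le b_n\ll x$.

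The natural choice is $F_\gamma=F_\alpha\cup F_\beta\cup\{n\}$, with $\preccurlyeq_\gamma$ the reflexive–transitive closure of ${\preccurlyeq_\alpha}\cup{\preccurlyeq_\beta}$ together with all pairs $(k,n)$ for $k\in F_\gamma$. There are two things to check.
\begin{itemize}
\item Every generating pair $(i,j)$ satisfies $b_i\le b_j$. For pairs from $\alpha$ or $\beta$ this is admissibility. For a pair $(k,n)$ with $k\in F_\alpha$, we have $b_k\le b_{t_\alpha}\le b_n$, and similarly for $k\in F_\beta$. Transitivity of $\le$ in $X$ carries this over to the closure.
\item The closure is antisymmetric. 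Suppose $i\preccurlyeq_\gamma j\preccurlyeq_\gamma i$. Then $b_i=b_j$, and because the $b_k$ are pairwise distinct, $i=j$.
\end{itemize}
So $\preccurlyeq_\gamma$ is a partial order whose greatest element is $n$, and it is $B$-admissible. Since $b_n\ll x$, we get $\gamma\in Q_B(x)$, and $\alpha,\beta\le\gamma$ by construction. The admissibility bookkeeping is really the substance of the lemma, and the pairwise distinctness of the $b_k$ is exactly what rules out cycles.

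\emph{Monotone.} This is clear: $x\le y$ gives $\Down x\subseteq\Down y$, hence $Q_B(x)\subseteq Q_B(y)$.

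\emph{Preserves directed suprema.} Let $D\subseteq X$ be directed with $x=\bigvee D$. Directed suprema in $\operatorname{Idl}(P_\omega)$ are unions, so it suffices to show $Q_B(x)=\bigcup_{y\in D}Q_B(y)$. Monotonicity gives the inclusion $\supseteq$. For $\subseteq$, take $\alpha\in Q_B(x)$, so $b_{t_\alpha}\ll\bigvee D$. As in Lemma~\ref{lem:Q-general}, interpolate $b_{t_\alpha}\ll w\ll\bigvee D$, choose $y\in D$ with $w\le y$, and conclude $b_{t_\alpha}\ll y$. Therefore $\alpha\in Q_B(y)$, which gives the required equality and completes the proof.
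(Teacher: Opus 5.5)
Your proof is correct. Its skeleton matches the paper's: lower set, directedness via a common upper bound taken from $B\cap\Down x$, monotonicity, then continuity. Two steps are carried out differently.

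\textbf{Directedness.} The paper takes no closure. On $F_\alpha\cup F_\beta\cup\{r\}$ it uses the full order pulled back from $X$, namely $i\preccurlyeq j\iff b_i\le b_j$. This order is automatically transitive and is antisymmetric because $n\mapsto b_n$ is injective. It contains $\preccurlyeq_\alpha$ and $\preccurlyeq_\beta$ by admissibility, and it has $r$ on top.

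You instead build the smallest such order, the reflexive--transitive closure of the generating pairs, and then check that it is still admissible and antisymmetric. This costs slightly more bookkeeping. Both constructions rely on the same key input, the pairwise distinctness of the $b_k$.

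\textbf{Scott continuity.} The paper checks that preimages of the basic Scott-open sets $\up\widehat p$ of the algebraic domain $U_\omega$ are Scott open. It computes
$Q_B^{-1}(\up\widehat\alpha)=\Up b_{t_\alpha}$ for admissible $\alpha$; $\varnothing$ for non-admissible $\alpha$ and for every $u_M$; and $X$ for $\bot_P$.

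You instead verify $Q_B(\bigvee D)=\bigcup_{y\in D}Q_B(y)$ directly by interpolation, exactly as in Lemma~\ref{lem:Q-general}, using that directed suprema of ideals are unions.

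Your route is more elementary and self-contained. The paper's route has the side benefit of recording the identity $Q_B^{-1}(\up\widehat{\alpha_n})=\Up b_n$, which is used immediately afterwards to get $S_{n,v}\ll Q_B$ from Lemma~\ref{lem:standard-step}. That identity is, of course, immediate from the definition in your setting as well.
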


\begin{proof}
Suppose that $\alpha\leq\beta$ and $\beta\in Q_B(x)$.  Then $\alpha$ is
$B$-admissible.  Since $t_\beta$ is greatest in $F_\beta$ and
$t_\alpha\in F_\beta$, one has
$b_{t_\alpha}\leq b_{t_\beta}\ll x$, so $\alpha\in Q_B(x)$.
Thus $Q_B(x)$ is lower.

Let $\alpha,\beta\in Q_B(x)$.  Since $B\cap\Down x$ is directed, there is
$b_r\in B\cap\Down x$ above both $b_{t_\alpha}$ and $b_{t_\beta}$.
On $F_\alpha\cup F_\beta\cup\{r\}$ define
\(
n\preccurlyeq m
\ \Longleftrightarrow\ 
b_n\leq_X b_m.
\)
Then $r$ is the greatest element of this finite order.
With $r$ as greatest element, this gives a $B$-admissible rooted order
$\gamma\in Q_B(x)$ above $\alpha$ and $\beta$.  Hence $Q_B(x)$ is directed.

The map $Q_B$ is monotone.  Moreover, for a rooted order $\alpha$, we have
\[
 Q_B^{-1}(\up\widehat\alpha)=
 \begin{cases}
  \Up b_{t_\alpha},&\alpha\text{ is $B$-admissible},\\
  \varnothing,&\text{otherwise},
 \end{cases}
\]
while $Q_B^{-1}(\up\widehat{u_M})=\varnothing$ and
$Q_B^{-1}(\up\widehat{\bot_P})=X$.  These are Scott open, and the
principal ideals form a compact basis of $U_\omega$.  Therefore $Q_B$ is
Scott continuous.
\end{proof}

For $M\in\Fin(\mathbb N)$, let $K_M:X\to U_\omega$ be constant with value
$\widehat{u_M}$.  The maps $K_M$ form a directed family whose supremum is
the constant top map.  Consequently, every $\varphi\ll Q_B$ satisfies
$\varphi\leq K_M$ for some finite $M$.  If $\varphi\leq Q_B$, put
\[
 L_\varphi(x)=
 \{b_{t_\alpha}:\alpha\in\varphi(x)\cap\mathcal R_\omega\}.
\]
Every rooted order occurring in $\varphi(x)$ has support contained in $M$,
so $L_\varphi(x)$ is a finite subset of
$\{b_n:n\in M\cap L\}\cap\Down x$.  
It is directed: a common upper bound of two rooted orders in the ideal $\varphi(x)\subseteq Q_B(x)$ is again a
rooted order, and its greatest label gives an upper bound of the two
corresponding basis elements.  Whenever $L_\varphi(x)$ is nonempty, it
therefore has a greatest element.  The proof of
Lemma~\ref{lem:deflation-extraction}, with $L_\varphi(x)$ in place of
$B_\varphi(x)$, shows that
$d_\varphi(x)=\max L_\varphi(x)$ is a finite-image deflation.  It also
shows that $\varphi\leq\psi$ implies $d_\varphi\leq d_\psi$ whenever both
maps are defined.

For $n\in L$, let $\alpha_n$ be the rooted order on the singleton $\{n\}$.
If $b_n\ll v$, define $S_{n,v}:X\to U_\omega$ by
\[
 S_{n,v}(x)=
 \begin{cases}
  \widehat{\alpha_n},&v\ll x,\\
  \widehat{\bot_P},&v\not\ll x.
 \end{cases}
\]
Since $Q_B^{-1}(\up\widehat{\alpha_n})=\Up b_n$ and
$\Up v\ll_{\mathcal O(X)}\Up b_n$, Lemma~\ref{lem:standard-step} gives
$S_{n,v}\ll Q_B$.

\begin{proposition}\label{prop:uniform-finite-cover}
Let $X$ be a countably based domain. Suppose that $[X\to U_\omega]$ is continuous and that
$X=\Up v_1\cup\cdots\cup\Up v_m$ for some
$b_{n_i}\ll v_i$, where $n_i\in L$.  Then $X$ is an RB-domain.
\end{proposition}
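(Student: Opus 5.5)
The argument will mirror Proposition~\ref{prop:finite-cover-extraction}, with $Q_B$ in place of $Q$, the step maps $S_{n,v}$ in place of $S_{u,v}$, and the extraction $d_\varphi(x)=\max L_\varphi(x)$ described just before the statement in place of Lemma~\ref{lem:deflation-extraction}.

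\emph{Step 1: a directed family below $Q_B$.} Continuity of $[X\to U_\omega]$ makes $\Down Q_B$ directed with supremum $Q_B$. Each $S_{n_i,v_i}$ lies in $\Down Q_B$, so we can choose $\varphi_0\ll Q_B$ above all of them. Put $\mathcal D=\{\varphi\ll Q_B:\varphi_0\leq\varphi\}$. This family is directed and cofinal in $\Down Q_B$.

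\emph{Step 2: each member gives a deflation.} Fix $\varphi\in\mathcal D$. Then $\varphi\leq Q_B$, and since $\varphi\ll Q_B\leq\bigvee_M K_M$, we have $\varphi\leq K_M$ for some finite $M$. By the remarks preceding the statement, $L_\varphi(x)$ is then a finite directed subset of $B\cap\Down x$ indexed inside $M$. To see that it is nonempty, take $x\in X$ and pick $i$ with $v_i\ll x$. Then $\widehat{\alpha_{n_i}}=S_{n_i,v_i}(x)\subseteq\varphi(x)$, so $\alpha_{n_i}\in\varphi(x)$ and $b_{n_i}\in L_\varphi(x)$. Hence $d_\varphi$ is defined and is a finite-image deflation.

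\emph{Step 3: monotonicity and directedness.} If $\varphi\leq\psi$ then $\varphi(x)\subseteq\psi(x)$, so $L_\varphi(x)\subseteq L_\psi(x)$ and $d_\varphi\leq d_\psi$. Since $\mathcal D$ is directed, the family $(d_\varphi)_{\varphi\in\mathcal D}$ is directed.

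\emph{Step 4: the supremum is $\id_X$.} This is the only point needing a new argument, because the step maps are indexed by basis elements rather than by arbitrary $u\ll x$. Let $x\in X$ and $b\in B\cap\Down x$, say $b=b_n$ with $n\in L$. By interpolation, choose $v$ with $b_n\ll v\ll x$. Then $S_{n,v}\ll Q_B$, and cofinality gives $\varphi\in\mathcal D$ with $S_{n,v}\leq\varphi$. Evaluating at $x$ gives $\widehat{\alpha_n}\subseteq\varphi(x)$, so $b_n\in L_\varphi(x)$ and $b_n\leq d_\varphi(x)$. Since $B$ is a basis, $x=\bigvee(B\cap\Down x)\leq\bigvee_{\varphi\in\mathcal D}d_\varphi(x)$. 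The reverse inequality holds because each $d_\varphi\leq\id_X$.

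Therefore $(d_\varphi)_{\varphi\in\mathcal D}$ is an approximate identity of deflations, and $X$ is an RB-domain.

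I expect the main obstacle to be Step~2. The claim that $L_\varphi(x)$ is directed and that $d_\varphi$ is Scott continuous is only sketched before the statement. I would verify it by repeating the argument of Lemma~\ref{lem:deflation-extraction}. Scott continuity of $\varphi$ gives $\varphi(\bigvee D)=\bigcup_{y\in D}\varphi(y)$, since directed suprema in $\operatorname{Idl}(P_\omega)$ are unions. Hence the maximum of $L_\varphi(\bigvee D)$ already appears in some $L_\varphi(y_0)$, and this forces $d_\varphi(\bigvee D)=d_\varphi(y_0)$.
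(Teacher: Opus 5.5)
Your proposal is correct and follows the paper's proof essentially step for step: the same directed cofinal family $\mathcal D=\{\varphi\ll Q_B:\varphi_0\leq\varphi\}$, nonemptiness of $L_\varphi(x)$ from the covering pairs, monotonicity of $\varphi\mapsto d_\varphi$, and the step maps $S_{n,v}$ to push the supremum up to $x$. The only cosmetic difference is in Step 4: you bound each $b_n\in B\cap\Down x$ and use $x=\bigvee(B\cap\Down x)$ directly, whereas the paper starts from an arbitrary $u\ll x$ and picks $u\leq b_n\ll v\ll x$; both are equally valid.
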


\begin{proof}
Choose $\varphi_0\ll Q_B$ above the finitely many maps $S_{n_i,v_i}$, and
let $\mathcal D$ be the set of all $\varphi\ll Q_B$ above $\varphi_0$.
This is a directed cofinal subset of $\Down Q_B$.  For every
$\varphi\in\mathcal D$, the finite bound above and the covering condition
make each $L_\varphi(x)$ nonempty, so $d_\varphi$ is a finite-image
deflation.  These deflations form a directed family.

Let $u\ll x$.  By the basis property and interpolation, choose $n\in L$
and $v\in X$ with $u\leq b_n\ll v\ll x$.  Cofinality gives
$\varphi\in\mathcal D$ with $S_{n,v}\leq\varphi$, and then
$u\leq b_n\leq d_\varphi(x)$.  
For every $u\ll x$, some $\varphi$ satisfies
$u\leq d_\varphi(x)$.  Since $X$ is continuous,
\(
x=\bigvee\Down x
\leq
\bigvee_\varphi d_\varphi(x).
\)
The reverse inequality follows from $d_\varphi\leq\id_X$.
Hence $X$ is an RB-domain.
\end{proof}

Let $V$ be the three-element algebraic domain used in the preceding
subsection and put $A_\omega=U_\omega\times V$.  This is a countably based
algebraic domain.  It is not a disjoint union of pointed domains, because
its two minimal elements $(\widehat{\bot_P},\ell)$ and
$(\widehat{\bot_P},r)$ have the common upper bound
$(\widehat{\bot_P},\top)$.

\begin{theorem}\label{thm:uniform-countable-test}
For every countably based domain $X$,
\[
 X\in\RB
 \quad\Longleftrightarrow\quad
 [X\to A_\omega]\text{ is continuous}.
\]
If $X$ is pointed, then
\[
 X\in\RB
 \quad\Longleftrightarrow\quad
 [X\to U_\omega]\text{ is continuous}.
\]
\end{theorem}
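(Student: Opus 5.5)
The plan is to mirror the proofs of Theorem~\ref{thm:universal-target} and Corollary~\ref{cor:pointed-test}, with $E_X$ replaced by $U_\omega$ and Proposition~\ref{prop:finite-cover-extraction} replaced by Proposition~\ref{prop:uniform-finite-cover}. In both equivalences the forward implications are immediate from Proposition~\ref{prop:fact}(i): $A_\omega$ and $U_\omega$ are domains by Lemma~\ref{lem:Pomega}, so $[X\to A_\omega]$ and $[X\to U_\omega]$ are continuous whenever $X\in\RB$. All the work is therefore in the converse directions.

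\textbf{Pointed case.} Suppose $X$ is pointed, countably based, and $[X\to U_\omega]$ is continuous. I will fix a countable basis $B=\{b_n:n\in L\}$ as in the construction of $Q_B$. Since $\bot_X\ll\bot_X$ and $B\cap\Down\bot_X$ has supremum $\bot_X$, the element $\bot_X$ lies in $B$, say $\bot_X=b_{n_1}$. Taking $v_1=\bot_X$ gives $X=\Up v_1$ with $b_{n_1}\ll v_1$. Proposition~\ref{prop:uniform-finite-cover} then yields $X\in\RB$.

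\textbf{General case.} Suppose $[X\to A_\omega]$ is continuous. Let $p:A_\omega\to U_\omega$ be the projection and $s(e)=(e,\ell)$ its section. The maps $f\mapsto s\circ f$ and $g\mapsto p\circ g$ exhibit $[X\to U_\omega]$ as a Scott-continuous retract of $[X\to A_\omega]$, so $[X\to U_\omega]$ is continuous. As observed before the statement, $A_\omega$ is not a disjoint union of pointed domains, so Proposition~\ref{prop:fact}(iv) forces $X$ to be Scott compact.

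\textbf{Producing the cover.} For each $x\in X$, pick $u\ll x$, then $b_n\in B$ with $u\leq b_n\ll x$ (using the basis property together with interpolation), and then $v$ with $b_n\ll v\ll x$. The sets $\Up v$ arising this way are Scott open and cover $X$. By compactness, finitely many of them, $\Up v_1,\dots,\Up v_m$ with $b_{n_i}\ll v_i$, already cover $X$. Proposition~\ref{prop:uniform-finite-cover} then gives $X\in\RB$.

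There is no serious obstacle, since the substantive work sits in Proposition~\ref{prop:uniform-finite-cover}. The step needing the most care is that each cover element has a witness $b_{n_i}$ taken \emph{from the basis}, rather than an arbitrary $u_i$ as in the general $E_X$ case.
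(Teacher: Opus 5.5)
Your proposal is correct and follows the paper's proof essentially step for step. Both directions match: the forward implications come from Proposition~\ref{prop:fact}(i). For the converse, you use the retract onto $[X\to U_\omega]$ and Proposition~\ref{prop:fact}(iv) to get Scott compactness, then a finite cover by sets $\Up v_i$ with basis witnesses $b_{n_i}\ll v_i$, and finally Proposition~\ref{prop:uniform-finite-cover}.

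Two small differences. Your observation that $\bot_X$ already lies in every basis (because $B\cap\Down\bot_X$ is nonempty) tidies up the paper's ``enlarge the basis by $\bot_X$ if necessary.'' The auxiliary $u\ll x$ in your cover step is harmless but unnecessary, since you can pick $b_n\in B\cap\Down x$ directly.
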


\begin{proof}
If $X$ is an RB-domain, both function spaces are continuous by Proposition~\ref{prop:fact} (i).  Conversely, suppose that $[X\to A_\omega]$ is
continuous.  Projection onto the first coordinate and the section
$e\mapsto(e,\ell)$ show, exactly as in the proof of
Theorem~\ref{thm:universal-target}, that $[X\to U_\omega]$ is a
Scott-continuous retract of $[X\to A_\omega]$, and hence is continuous.
Proposition~\ref{prop:fact} (iv) implies that $X$ is Scott compact, because
$A_\omega$ is not a disjoint union of pointed domains.

For every $x\in X$, choose $b_n\in B\cap\Down x$ and then choose $v$ with
$b_n\ll v\ll x$.  The corresponding sets $\Up v$ cover $X$, so Scott
compactness gives a finite subcover.  Proposition~\ref{prop:uniform-finite-cover}
now yields $X\in\RB$.

If $X$ is pointed, enlarge the chosen basis by $\bot_X$ if necessary.  The
single choice $b_n=v=\bot_X$ satisfies $b_n\ll v$ and $\Up v=X$.
Proposition~\ref{prop:uniform-finite-cover} therefore gives the converse
from the continuity of $[X\to U_\omega]$.
\end{proof}

\section{Function spaces of FS-domains}\label{sec:fs}

The RB characterization has two natural FS analogues: one may ask whether an
FS-domain preserves continuity of the function space for all targets, or whether FS-domains can
be detected from the class of function spaces it generates.  Of course, if FS-domains are not identical to RB-domains, then the first analogue is false. We will use an example to show that.
The second becomes correct after strengthening the conclusion from continuity to
FS-membership.  

\subsection{Failure of the continuity analogue}\label{sec:disk}
 Let $\Disc$ be the dcpo of all closed disks in the Euclidean plane, together
with the whole plane as its least element, ordered by reverse inclusion.  Its
FS property is recorded in
\cite[Section~4.2.2, example following Proposition~4.2.12]{AbramskyJung1994};
it also follows from Lawson's formal-ball theorem \cite{Lawson2008}.  Using
\cite[Proposition~5.3]{ChenKouLyu2026}, we give a direct function-space
obstruction showing that $[\Disc\to E_{\Disc}]$ is not continuous.

Represent a non-bottom disk $\overline B(\mathbf z,r)$ by
$\mathbf x=(-r,\mathbf z)\in H:=(-\infty,0]\times\mathbb R^2$.
Throughout this subsection, $\|\cdot\|$ denotes the Euclidean norm and
$\mathbf v\otimes\mathbf w$ denotes the matrix
$\mathbf v\mathbf w^T$.  Let
$C=\{(t,\mathbf z)\in\mathbb R\times\mathbb R^2:
 t\geq\|\mathbf z\|\}$
be the three-dimensional Lorentz cone, and write
$\mathbf x\leq_C\mathbf y$ when $\mathbf y-\mathbf x\in C$.
A map between subsets of $\mathbb R^3$ is called $C$-monotone if it is
order preserving with respect to $\leq_C$.  Under the representation above,
reverse inclusion of disks is precisely the
order $\leq_C$, and for non-bottom elements
\(
  \mathbf x\ll\mathbf y
  \ \Longleftrightarrow \ 
  \mathbf y-\mathbf x\in\operatorname{int}C
\)
\cite[Section~3]{ChenKouLyu2026}.  Put
$J=\operatorname{diag}(1,-1,-1)$ and
$\Lambda(A)=\operatorname{tr}(JA)$.  If
$\mathbf v=(v_0,\mathbf v')$ and
$\mathbf w=(w_0,\mathbf w')$ belong to $C$, then
\[
  \Lambda(\mathbf v\otimes\mathbf w)
  =\mathbf w^{T}J\mathbf v
  =v_0w_0-\mathbf v'\mathbin{\cdot}\mathbf w'
  \geq0,
\]
whereas
$\Lambda(I_3)=-1.$
We use the following scalar consequence of
\cite[Proposition~5.3]{ChenKouLyu2026}.

\begin{proposition}\label{prop:disk-scalar-flux}
Let $\Omega\subseteq\mathbb R^3$ be an open rectangular box, let
$q:\Omega\to\mathbb R^3$ be a finite-image $C$-monotone map, and let
$0\leq\psi\in C_c^\infty(\Omega)$.  Then $q$ is Lebesgue measurable,
the following matrix integral is well defined coordinatewise, and
\[
  \Lambda\!\left(
    -\int_\Omega q(\mathbf x)\otimes\nabla\psi(\mathbf x)\,d\mathbf x
  \right)\geq0.
\]
\end{proposition}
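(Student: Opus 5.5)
The plan is to read the statement as a scalar shadow of the matrix-valued flux inequality in \cite[Proposition~5.3]{ChenKouLyu2026}. That proposition should say that the distributional derivative of a finite-image $C$-monotone map is a nonnegative combination of matrices $\mathbf v\otimes\mathbf w$ with $\mathbf v,\mathbf w\in C$. The point is that the weaker property ``the derivative maps $C$ into $C$'' does \emph{not} suffice, since $I_3$ maps $C$ into $C$ but $\Lambda(I_3)=-1$. So the rank-one structure coming from the finite image is essential, and the proof should carry it through explicitly.

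\textbf{Measurability and integrability.} Write $q(\Omega)=\{\mathbf a_1,\dots,\mathbf a_k\}$. For each $j$, the set $U_j=q^{-1}(\mathbf a_j+C)$ is a $C$-upper subset of $\Omega$, because $q$ is $C$-monotone. An upper set for a closed convex cone with nonempty interior has Lebesgue-null boundary: locally it is bounded by the graph of a Lipschitz function along the axis of the cone. Hence each $U_j$ is measurable. Each fibre $q^{-1}(\mathbf a_j)$ is a Boolean combination of the sets $U_i$, so $q$ is measurable. Since $q$ is bounded and $\nabla\psi$ has compact support, every coordinate of $\int_\Omega q\otimes\nabla\psi$ is a finite integral.

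\textbf{Structure of the derivative.} This is the main step, and the one I expect to be the real obstacle. The map $q$ is piecewise constant, with pieces separated by the boundaries of the $C$-upper sets $U_j$, which are Lipschitz surfaces. Integrating by parts piece by piece (Gauss--Green on sets of finite perimeter) gives
\[
  -\int_\Omega q\otimes\nabla\psi\,d\mathbf x
  =\sum_{\text{interfaces}}\int
    \bigl(\mathbf a^{+}-\mathbf a^{-}\bigr)\otimes\nu\,\psi\,d\mathcal H^2 .
\]
Here $\mathbf a^{\pm}$ are the values on the two sides of an interface, and $\nu$ is the measure-theoretic unit normal pointing towards the $+$ side. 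Two facts are then needed.
\begin{enumerate}[label=\textup{(\alph*)}]
\item $\mathbf a^{+}-\mathbf a^{-}\in C$. This follows from $C$-monotonicity, since points on the $+$ side are reached from the $-$ side by moving in directions of $\operatorname{int}C$.
\item $\nu\in C^{*}=C$, using that the Lorentz cone is self-dual. The reason is that an upper set $U+C=U$ has inward normals lying in the dual cone.
\end{enumerate}
Fact (b) is delicate at points where several pieces meet and where the normals are only defined $\mathcal H^2$-almost everywhere. I would first try to obtain the decomposition one level at a time, writing $q$ as a telescoping sum of indicators of the upper sets $U_j$ times vector increments. If that does not close, I would instead take exactly this decomposition from \cite[Proposition~5.3]{ChenKouLyu2026} rather than reprove it.

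\textbf{Conclusion.} Apply $\Lambda$, which is linear and so passes through the sum and the integrals. On each interface term, the computation displayed before the statement gives $\Lambda\bigl((\mathbf a^{+}-\mathbf a^{-})\otimes\nu\bigr)\geq0$, because both vectors lie in $C$. Since $\psi\geq0$, each interface integral is nonnegative, and therefore $\Lambda\bigl(-\int_\Omega q\otimes\nabla\psi\bigr)\geq0$.
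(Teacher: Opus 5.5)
Your proposal is correct. Its decisive step is the paper's own: $\Lambda(\mathbf v\otimes\mathbf w)=\mathbf w^TJ\mathbf v\geq0$ for $\mathbf v,\mathbf w\in C$, so $\Lambda$ is nonnegative on every nonnegative superposition of such matrices.

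The difference is in how the tested matrix is shown to be such a superposition. The paper does not argue this itself. It takes both measurability and the membership of $-\int_\Omega q\otimes\nabla\psi$ in the closed convex cone generated by $\{\mathbf v\otimes\mathbf w:\mathbf v,\mathbf w\in C\}$ directly from \cite[Proposition~5.3]{ChenKouLyu2026}, so its proof is two lines. You reconstruct that input:
\begin{itemize}
\item Your measurability argument is fine: relative $C$-upper sets are bounded by Lipschitz graphs, hence have null boundary.
\item The same Lipschitz structure makes the fibres $E_j=q^{-1}(\mathbf a_j)$ a locally finite-perimeter (Caccioppoli) partition of $\Omega$.
\item Gauss--Green then yields your interface formula, because $\mathcal H^2$-almost every point of $\bigcup_j\partial^*E_j$ lies in exactly one set $\partial^*E_i\cap\partial^*E_j$, with opposite normals there.
\end{itemize}
This gives a self-contained proof that shows why the rank-one jump structure, and not mere $C$-positivity of the derivative, forces $\Lambda\geq0$. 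That is exactly what is played against $\Lambda(I_3)=-1$ in Proposition~\ref{prop:disk-fixed-scale}. The price is a layer of geometric measure theory that the citation hides.

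If you carry out your own route rather than falling back on the citation, two points need adjusting.

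First, the level-by-level telescoping closes only when $q(\Omega)$ is a chain. In general the coefficient of $1_{q^{-1}(\mathbf a_j+C)}$ is an inclusion--exclusion (M\"obius) combination of the values, and it can leave $C$. For the values $\mathbf 0$, $(2,\pm2,0)$, $(5,0,2)$ (a diamond, realizable by a $C$-monotone map), the top coefficient is $(5,0,2)-(2,2,0)-(2,-2,0)+\mathbf 0=(1,0,2)\notin C$. Paired with the normal $(1,0,1)\in C$, which such an upper set can have, it gives $\Lambda=-1$. So termwise positivity is lost, and you must use the interface decomposition.

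Second, (a) and (b) do not both hold for an arbitrary labelling of the two sides, and your justification of (a) silently presupposes (b). The right order is as follows.
\begin{enumerate}
\item At $\mathcal H^2$-a.e.\ interface point, a blow-up plus $C$-monotonicity shows: if $\langle\mathbf c,\nu_{i\to j}\rangle>0$ for some $\mathbf c\in\operatorname{int}C$, then $\mathbf a_i\leq_C\mathbf a_j$.
\item Since $\mathbf a_i\neq\mathbf a_j$, this cannot happen in both directions, so one of $\pm\nu_{i\to j}$ lies in $C^*=C$.
\item Call the side into which that normal points the $+$ side; the same blow-up then gives (a).
\end{enumerate}
Because $(\mathbf a^+-\mathbf a^-)\otimes\nu$ is unchanged when the labels are swapped, nothing more is needed.
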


\begin{proof}
Proposition \cite[Proposition~5.3]{ChenKouLyu2026} gives measurability and places the tested matrix in the
closed convex cone generated by
$\mathbf v\otimes\mathbf w$ with $\mathbf v,\mathbf w\in C$.  The
preceding calculation shows that $\Lambda$ is nonnegative on every generator
and hence on that cone.
\end{proof}

Fix
$P=[-6,-4]\times[-1,1]^2, \ \Omega=(-6,-4)\times(-1,1)^2,$
and choose $0\leq\psi\in C_c^\infty(\Omega)$ with
$\int_\Omega\psi=1$.  Set
\(
  L_\psi=\int_\Omega\|\nabla\psi(\mathbf x)\|\,d\mathbf x
\)
and choose $\eta>0$ such that $\eta L_\psi<1$.  Let
$\mathbf e=(1,0,0)$.

\begin{lemma}\label{lem:disk-order-interval}
If $\mathbf x-\eta\mathbf e\leq_C\mathbf y\leq_C\mathbf x$, then
$\|\mathbf x-\mathbf y\|\leq\eta.$
\end{lemma}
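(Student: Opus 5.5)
We want to show: if $\mathbf x-\eta\mathbf e\leq_C\mathbf y\leq_C\mathbf x$, then $\|\mathbf x-\mathbf y\|\leq\eta$. The plan is to translate both order relations into cone memberships and then use the Lorentz cone inequality directly. Put $\mathbf w=\mathbf x-\mathbf y=(w_0,\mathbf w')$. The hypothesis $\mathbf y\leq_C\mathbf x$ says $\mathbf w\in C$, that is, $w_0\geq\|\mathbf w'\|$. The hypothesis $\mathbf x-\eta\mathbf e\leq_C\mathbf y$ says $\mathbf y-\mathbf x+\eta\mathbf e=(\eta-w_0,-\mathbf w')\in C$, that is, $\eta-w_0\geq\|\mathbf w'\|$.

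Adding the two inequalities gives $\eta\geq 2\|\mathbf w'\|$. Also, $w_0\leq\eta-\|\mathbf w'\|\leq\eta$ and $w_0\geq\|\mathbf w'\|\geq0$. The Euclidean norm then satisfies
\[
\|\mathbf w\|^2=w_0^2+\|\mathbf w'\|^2 .
\]
We bound this using $w_0\leq\eta-\|\mathbf w'\|$, which is valid with both sides nonnegative, so squaring is allowed. Writing $s=\|\mathbf w'\|$, we get
\[
\|\mathbf w\|^2\leq(\eta-s)^2+s^2=\eta^2-2s(\eta-s)\leq\eta^2,
\]
since $0\leq s\leq\eta/2\leq\eta$. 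Hence $\|\mathbf x-\mathbf y\|\leq\eta$.

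There is no real obstacle here. The only care needed is in translating $\leq_C$ correctly, which is done via the componentwise description of $C$ with $\mathbf e=(1,0,0)$. One also has to note that $w_0\ge0$ and $\eta-s\ge0$ before squaring, and both follow from the cone conditions. The lemma is a purely finite-dimensional geometric estimate and uses none of the domain-theoretic machinery. Geometrically, it says that the order interval $[\mathbf x-\eta\mathbf e,\mathbf x]_C$, a double cone, has Euclidean radius at most $\eta$ about $\mathbf x$.
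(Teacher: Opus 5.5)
Your proof is correct and follows the paper's route: you translate both order relations into the cone inequalities $w_0\geq\|\mathbf w'\|$ and $\eta-w_0\geq\|\mathbf w'\|$ and then bound the Euclidean norm. The only difference is cosmetic. You bound $w_0$ by $\eta-s$ and use $(\eta-s)^2+s^2=\eta^2-2s(\eta-s)$, which avoids the paper's case split on whether $d_0\leq\eta/2$.
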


\begin{proof}
Write $\mathbf d=\mathbf x-\mathbf y=(d_0,\mathbf d')$.
The two order inequalities say that
$\mathbf d\in C$ and $\eta\mathbf e-\mathbf d\in C$.  Hence
$d_0\geq\|\mathbf d'\|$ and $\eta-d_0\geq\|\mathbf d'\|$.
Thus $0\leq d_0\leq\eta$ and
$\|\mathbf d'\|\leq\min\{d_0,\eta-d_0\}$.  If
$d_0\leq\eta/2$, then
$\|\mathbf d\|^2\leq2d_0^2\leq\eta^2/2$.  If
$d_0\geq\eta/2$, then
$\|\mathbf d\|^2 \leq d_0^2+(\eta-d_0)^2 \leq\eta^2.$
In either case $\|\mathbf d\|\leq\eta$.
\end{proof}

\begin{proposition}\label{prop:disk-fixed-scale}
There is no finite-image $C$-monotone map $s:P\to H$ satisfying
$\mathbf x-\eta\mathbf e\leq_C s(\mathbf x)\leq_C\mathbf x \ (\mathbf x\in P).$
\end{proposition}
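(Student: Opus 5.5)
Argue by contradiction: assume such an $s$ exists and test it against the scalar flux inequality of Proposition~\ref{prop:disk-scalar-flux}. Since $s$ is defined on $P\supseteq\Omega$, its restriction $q=s|_\Omega$ is a finite-image $C$-monotone map on the open box $\Omega$. Proposition~\ref{prop:disk-scalar-flux} then makes $q$ measurable and gives
\[
 \Lambda\!\left(-\int_\Omega s(\mathbf x)\otimes\nabla\psi(\mathbf x)\,d\mathbf x\right)\geq0 .
\]
The aim is to show that the left-hand side is in fact strictly negative, because $s$ is uniformly close to the identity.

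Write $s(\mathbf x)=\mathbf x+\mathbf r(\mathbf x)$. The hypothesis $\mathbf x-\eta\mathbf e\leq_C s(\mathbf x)\leq_C\mathbf x$ together with Lemma~\ref{lem:disk-order-interval} gives $\|\mathbf r(\mathbf x)\|\leq\eta$ for all $\mathbf x$. The residual $\mathbf r=s-\id$ is measurable and bounded, so the integral splits by linearity into an identity part and an error part.

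For the identity part, integrate by parts coordinatewise; since $\psi$ has compact support there are no boundary terms:
\[
 -\int_\Omega \mathbf x\otimes\nabla\psi\,d\mathbf x=\int_\Omega\psi\,I_3\,d\mathbf x=I_3 .
\]
Its $\Lambda$-value is $\Lambda(I_3)=-1$.

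For the error part, use $\Lambda(\mathbf v\otimes\mathbf w)=\mathbf w^TJ\mathbf v$ together with $\|J\|=1$ to get $|\Lambda(\mathbf v\otimes\mathbf w)|\leq\|\mathbf v\|\,\|\mathbf w\|$. Hence
\[
 \Bigl|\Lambda\Bigl(-\int_\Omega\mathbf r\otimes\nabla\psi\Bigr)\Bigr|\leq\int_\Omega\|\mathbf r\|\,\|\nabla\psi\|\leq\eta L_\psi<1 .
\]
Combining the two parts, the total $\Lambda$-value is at most $-1+\eta L_\psi<0$. This contradicts the nonnegativity above, so no such $s$ exists.

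The main obstacle is checking that Proposition~\ref{prop:disk-scalar-flux} applies verbatim. Its hypotheses ask for a map into $\mathbb R^3$, whereas $s$ maps into $H\subseteq\mathbb R^3$; this is harmless, since $C$-monotonicity is the same condition either way. The remaining points are measurability of $\mathbf r$, which follows from that of $s$, and the routine integration by parts. Everything else is a direct estimate.
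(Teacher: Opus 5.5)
Your proposal is correct and follows essentially the same route as the paper: restrict $s$ to $\Omega$, apply the scalar flux inequality, compute $-\int_\Omega \mathbf x\otimes\nabla\psi = I_3$ by integration by parts, and use Lemma~\ref{lem:disk-order-interval} with $\|J\mathbf z\|=\|\mathbf z\|$ to bound the deviation from $\Lambda(I_3)=-1$ by $\eta L_\psi<1$. The only difference is cosmetic: you write $s=\id+\mathbf r$ explicitly and split the integral, whereas the paper estimates $\Lambda(\cdot)-\Lambda(I_3)$ directly.
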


\begin{proof}
Suppose that such an $s$ exists and put $q=s|_\Omega$.  By
Proposition~\ref{prop:disk-scalar-flux},
\(
  \Lambda\!\left(
    -\int_\Omega q\otimes\nabla\psi
  \right)\geq0.
\)
Entrywise integration by parts gives
\(
  -\int_\Omega \mathbf x\otimes\nabla\psi(\mathbf x)\,d\mathbf x
  =I_3,
\)
because
$-\int_\Omega x_i\partial_j\psi=\delta_{ij}\int_\Omega\psi
=\delta_{ij}$.
By Lemma~\ref{lem:disk-order-interval},
$\sup_{\Omega}\|q(\mathbf x)-\mathbf x\|\leq\eta$. Since $J$ only
changes the signs of the last two coordinates,
$\|J\mathbf z\|=\|\mathbf z\|$ for every $\mathbf z\in\mathbb R^3$.
We have
\begin{align*}
 \left|
  \Lambda\!\left(-\int_\Omega q\otimes\nabla\psi\right)
  -\Lambda(I_3)
  \right| 
 &=
 \left|
   \int_\Omega
   \nabla\psi(\mathbf x)^TJ
   \bigl(\mathbf x-q(\mathbf x)\bigr)\,d\mathbf x
 \right| 
 \leq
 \sup_{\Omega}\|q(\mathbf x)-\mathbf x\|
 \int_\Omega\|\nabla\psi(\mathbf x)\|\,d\mathbf x
 \leq\eta L_\psi<1.
\end{align*}
Together with $\Lambda(I_3)=-1$, this implies
\(
  \Lambda\!\left(-\int_\Omega q\otimes\nabla\psi\right)<0,
\)
contradicting the nonnegativity established above.
\end{proof}

For each $\mathbf x\in P$, put
\(
  \mathbf u_{\mathbf x}
  =\mathbf x-\frac{3\eta}{4}\mathbf e,
  \
  \mathbf v_{\mathbf x}
  =\mathbf x-\frac{\eta}{2}\mathbf e.
\)
Then
$\mathbf u_{\mathbf x}\ll\mathbf v_{\mathbf x}\ll\mathbf x$ by
the characterization of the way-below relation above.  Moreover, the set
\(
  N_{\mathbf x}
  =\left\{\mathbf y\in H:
    \mathbf y-\mathbf v_{\mathbf x}\in\operatorname{int}C,
    \
    \mathbf u_{\mathbf x}-\mathbf y+\eta\mathbf e
       \in\operatorname{int}C
  \right\}
\)
is a Euclidean neighborhood of $\mathbf x$.  Indeed, at
$\mathbf y=\mathbf x$ the two displayed vectors are respectively
$(\eta/2)\mathbf e$ and $(\eta/4)\mathbf e$, both of which belong to
$\operatorname{int}C$.  Compactness of $P$ therefore gives
$\mathbf x_1,\ldots,\mathbf x_m\in P$ such that
$P\subseteq\bigcup_{j=1}^mN_{\mathbf x_j}$.  Write
$\mathbf u_j=\mathbf u_{\mathbf x_j}, \ \mathbf v_j=\mathbf v_{\mathbf x_j}.$
Then, for every $\mathbf y\in P$, some $j$ satisfies
$\mathbf v_j\ll\mathbf y, \ \mathbf y-\eta\mathbf e\leq_C\mathbf u_j.$
Let
$Q_{\Disc}:\Disc\to E_{\Disc}, \ Q_{\Disc}(x)=(\Down x,0),$
be the canonical map introduced above.  For
$j=1,\ldots,m$, let
$S_j=S_{\mathbf u_j,\mathbf v_j},$
where $S_{u,v}$ is the step map defined above.  Since
$\mathbf u_j\ll\mathbf v_j$, Lemma~\ref{lem:requirements-waybelow}
gives
$S_j\ll Q_{\Disc} \ (1\leq j\leq m).$
\begin{theorem}\label{thm:disk-direct-noncontinuity}
The finite family $S_1,\ldots,S_m$ has no common upper bound in
$\Down Q_{\Disc}$.  
Consequently,
\(
  [\Disc\to E_{\Disc}]\)
  is not continuous.
\end{theorem}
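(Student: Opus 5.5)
Suppose, for a contradiction, that some $\varphi\ll Q_{\Disc}$ satisfies $S_j\leq\varphi$ for $1\leq j\leq m$. We will extract from $\varphi$ a finite-image $C$-monotone map $s:P\to H$ with $\mathbf x-\eta\mathbf e\leq_C s(\mathbf x)\leq_C\mathbf x$ on $P$, which Proposition~\ref{prop:disk-fixed-scale} forbids. The construction is the one in Lemma~\ref{lem:deflation-extraction}.

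First, since $\varphi\ll Q_{\Disc}\leq\bigvee_M K_M$, there is a finite $M\subseteq\Disc$ with $\varphi\leq K_M$. Lemma~\ref{lem:deflation-extraction} then shows that, writing $\varphi(x)=(B_\varphi(x),0)$, each $B_\varphi(x)$ is a finite directed subset of $M\cap\Down x$. Now let $\mathbf y\in P$ and choose $j$ with $\mathbf v_j\ll\mathbf y$ and $\mathbf y-\eta\mathbf e\leq_C\mathbf u_j$. Then $(\{\mathbf u_j\},0)=S_j(\mathbf y)\leq\varphi(\mathbf y)$, so $B_\varphi(\mathbf y)$ is nonempty on $P$. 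Define
\[
s(\mathbf y)=\max B_\varphi(\mathbf y),\qquad \mathbf y\in P.
\]

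The point of taking the maximum is that $s(\mathbf y)$ lies above $\mathbf u_j$. From $s(\mathbf y)\in\Down\mathbf y$ we get $s(\mathbf y)\leq_C\mathbf y$. From $\mathbf u_j\in B_\varphi(\mathbf y)$ we get $\mathbf u_j\leq s(\mathbf y)$, and therefore $\mathbf y-\eta\mathbf e\leq_C\mathbf u_j\leq_C s(\mathbf y)$.

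Next we check that $s$ is valid input for Proposition~\ref{prop:disk-fixed-scale}.
\begin{enumerate}[label=\textup{(\alph*)}]
\item The image of $s$ lies in the finite set $M$, so $s$ has finite image.
\item Monotonicity of $\varphi$ gives $B_\varphi(\mathbf x)\subseteq B_\varphi(\mathbf y)$ whenever $\mathbf x\leq_C\mathbf y$. Hence $s$ is $C$-monotone, since the order of $\Disc$ on non-bottom points is $\leq_C$.
\item We need $s$ to land in $H$, i.e.\ never to be the bottom element (the whole plane), which has no representative in $H$. This holds because $s(\mathbf y)\geq\mathbf u_j$ and $\mathbf u_j$ is a genuine disk of radius $-(x_0-3\eta/4)>0$. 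Since disks are ordered by reverse inclusion, anything above $\mathbf u_j$ is also a disk, so $s(\mathbf y)$ is a non-bottom element and lies in $H$.
\end{enumerate}
Scott continuity of $s$ is not needed, because Proposition~\ref{prop:disk-fixed-scale} only requires finite image and $C$-monotonicity. Proposition~\ref{prop:disk-fixed-scale} now gives the contradiction, so no common upper bound of $S_1,\ldots,S_m$ exists in $\Down Q_{\Disc}$.

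For the consequence, each $S_j$ lies in $\Down Q_{\Disc}$ by Lemma~\ref{lem:requirements-waybelow}. Hence $\Down Q_{\Disc}$ is not directed. In a continuous dcpo every $\Down f$ is directed, so $[\Disc\to E_{\Disc}]$ is not continuous.

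The main point requiring care is step (c): the bottom element must be excluded and the order identifications kept straight, so that $s$ really maps $P$ into $H$ and is monotone for $\leq_C$. Once $s$ is in place, the argument is Lemma~\ref{lem:deflation-extraction} combined with Proposition~\ref{prop:disk-fixed-scale}.
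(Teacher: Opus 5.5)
Your proof is correct and follows the paper's argument essentially step for step. You extract a finite $M$ from $\varphi\ll Q_{\Disc}$, set $s(\mathbf y)=\max B_\varphi(\mathbf y)$ (nonempty on $P$ via the covering pairs $(\mathbf u_j,\mathbf v_j)$), check non-bottomness, finite image, $C$-monotonicity and the two estimates, contradict Proposition~\ref{prop:disk-fixed-scale}, and conclude that $\Down Q_{\Disc}$ is not directed. The only cosmetic difference is that you cite Lemma~\ref{lem:deflation-extraction} for the form of $B_\varphi(x)$, where the paper re-derives it inline.
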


\begin{proof}
Assume, to the contrary, that there is
$\varphi\ll Q_{\Disc}$ satisfying
$S_j\leq\varphi \quad(1\leq j\leq m).$
Since the constant maps $K_M$, $M\in\Fin(\Disc)$, form a directed
family whose supremum is the constant top map, and since
$Q_{\Disc}$ lies below that supremum, the relation
$\varphi\ll Q_{\Disc}$ yields a finite set $M\subseteq\Disc$ such that
$\varphi\leq K_M.$
Also $\varphi\leq Q_{\Disc}$.  Hence, for every $x\in\Disc$, there is
$B_\varphi(x)\in\mathcal{D}(\Disc)$ such that
$\varphi(x)=(B_\varphi(x),0), \ B_\varphi(x)\subseteq M\cap\Down x.$
Fix $\mathbf y\in P$ and choose $j$ so that $\mathbf v_j\ll\mathbf y$ and $\mathbf y-\eta\mathbf e\leq_C\mathbf u_j$.  Since
$\mathbf v_j\ll\mathbf y$, the definition of $S_j$ and the assumption $S_j\leq\varphi$ give
$(\{\mathbf u_j\},0)=S_j(\mathbf y) \leq\varphi(\mathbf y).$
Thus $\mathbf u_j\in B_\varphi(\mathbf y)$.  In particular,
$B_\varphi(\mathbf y)$ is a nonempty finite directed set.  It therefore
has a greatest element; define
$s(\mathbf y)=\max B_\varphi(\mathbf y) \ (\mathbf y\in P).$
Since $B_\varphi(\mathbf y)$ contains the non-bottom element
$\mathbf u_j$, its greatest element is non-bottom and hence belongs to
$H$.  The image of $s$ is contained in the finite set $M$.  If
$\mathbf x\leq_C\mathbf y$ in $P$, monotonicity of $\varphi$ gives
$B_\varphi(\mathbf x)\subseteq B_\varphi(\mathbf y)$, and hence
$s(\mathbf x)\leq_C s(\mathbf y)$.  Thus $s$ is $C$-monotone.

For the lower estimate, choose $j$ so that $\mathbf v_j\ll\mathbf y$ and $\mathbf y-\eta\mathbf e\leq_C\mathbf u_j$.  Since
$\mathbf u_j\in B_\varphi(\mathbf y)$,
$\mathbf y-\eta\mathbf e \leq_C\mathbf u_j \leq_C s(\mathbf y).$
For the upper estimate, the inclusion $B_\varphi(\mathbf y)\subseteq\Down\mathbf y$ gives
$s(\mathbf y)\in\Down\mathbf y$, so
$s(\mathbf y)\ll\mathbf y$ and therefore
$s(\mathbf y)\leq_C\mathbf y$.  We have therefore
constructed a finite-image $C$-monotone map $s:P\to H$ satisfying the two inequalities in Proposition~\ref{prop:disk-fixed-scale}, contradicting
Proposition~\ref{prop:disk-fixed-scale}.

Thus the maps $S_1,\ldots,S_m$, each way below $Q_{\Disc}$, have no common upper
bound in $\Down Q_{\Disc}$.  If $[\Disc\to E_{\Disc}]$ were continuous,
then $\Down Q_{\Disc}$ would be directed, and a finite subset of it would
have a common upper bound in it.  Hence the function space is not
continuous.
\end{proof}

\begin{corollary}\label{thm:fs-counterexample}
There is a pointed algebraic domain $E$ such that
$\Disc\in\FS$ and $[\Disc\to E]$ is not continuous.
\end{corollary}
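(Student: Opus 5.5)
The corollary follows by combining two facts: one already established in the paper, and one recorded from the literature. The plan is to take $E=E_{\Disc}$, the test domain built in Section~\ref{sec:rb} from the underlying set $\Disc$.

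First, I verify that $E_{\Disc}$ has the required structure. Lemma~\ref{lem:EX-algebraic}, applied with $X=\Disc$, shows that $E_{\Disc}$ is a pointed algebraic domain. Its least element is $(\varnothing,0)$, and its compact basis is $\mathcal K_{\Disc}$. No property of $\Disc$ beyond its being a poset enters here.

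Second, I note that $\Disc$ is an FS-domain. This is the recorded fact from \cite[Section~4.2.2]{AbramskyJung1994}, or equivalently from Lawson's formal-ball theorem \cite{Lawson2008}, as cited at the start of Subsection~\ref{sec:disk}. I take it as given and do not reprove it. If one wanted a self-contained argument, the natural route would be to exhibit maps that shrink radius and pull centres towards a finite net, which are finitely separated from the identity. That is not needed for the corollary.

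Third, I take the non-continuity of $[\Disc\to E_{\Disc}]$ from Theorem~\ref{thm:disk-direct-noncontinuity}. That theorem shows the finitely many step maps $S_1,\dots,S_m\ll Q_{\Disc}$ have no common upper bound in $\Down Q_{\Disc}$. Hence $\Down Q_{\Disc}$ is not directed, so the function space is not continuous. All the genuine work, namely the flux obstruction of Proposition~\ref{prop:disk-fixed-scale} via \cite[Proposition~5.3]{ChenKouLyu2026}, has already been carried out there.

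There is no real obstacle left at this point. The only points to check are that $E_{\Disc}$ is indeed pointed and algebraic as required, and that the FS property of $\Disc$ is correctly attributed to the cited sources. Combining the three facts yields the corollary. As a remark, the result is consistent with Proposition~\ref{prop:fact}(i) and Corollary~\ref{cor:pointed-test}: since $\Disc$ is pointed, the corollary re-derives that $\Disc\notin\RB$.
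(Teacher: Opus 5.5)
Your proposal is correct and follows the paper's proof exactly: take $E=E_{\Disc}$, obtain pointed algebraicity from Lemma~\ref{lem:EX-algebraic}, cite the classical FS property of $\Disc$, and get non-continuity from Theorem~\ref{thm:disk-direct-noncontinuity}. Your closing remark is also right: since $\Disc$ is pointed, this re-derives $\Disc\notin\RB$ via Corollary~\ref{cor:pointed-test}.
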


\begin{proof}
By Lemma~\ref{lem:EX-algebraic}, $E_{\Disc}$ is a pointed algebraic
domain.  The disk domain belongs to $\FS$ by the classical results cited at
the beginning of this section, and
Theorem~\ref{thm:disk-direct-noncontinuity} shows that
$[\Disc\to E_{\Disc}]$ is not continuous.  Thus one may take
$E=E_{\Disc}$.
\end{proof}

The preceding counterexample concerns only continuity of the function
space.  If the function space is itself required to be an FS-domain, the
situation is simpler and follows formally from Cartesian closure and retract
closure.

\begin{remark}\label{rem:fs-valued-characterizations}
For every domain $D$,
\[
D\in\FS
\quad\Longleftrightarrow\quad
[D\to D]\in\FS.
\]
Indeed, the forward implication follows from the Cartesian closure of
$\FS$.  Conversely, fix $d_0\in D$.  The constant-map embedding
$\const:D\to[D\to D]$ and the evaluation map
$\ev_{d_0}:[D\to D]\to D$ satisfy
$\ev_{d_0}\circ\const=\id_D$.  Thus $D$ is a Scott-continuous retract of
$[D\to D]$, and the converse follows from the retract closure of
FS-domains.

More generally, let $S$ be any fixed FS-domain.  For every domain $E$, the
following conditions are equivalent:
\[
E\in\FS,
\qquad
[S\to E]\in\FS,
\qquad
[D\to E]\in\FS\text{ for every }D\in\FS.
\]
The only nonformal implication is again the retract argument.  If
$[S\to E]$ is an FS-domain, choose $s_0\in S$.  The constant-map embedding
$E\to[S\to E]$ and evaluation at $s_0$ exhibit $E$ as a
Scott-continuous retract of $[S\to E]$.
\end{remark}

The preceding results leave the following two natural questions open.

\begin{question}\label{q:coherent-source-fs-target}
Let $D$ be a coherent domain and let $E$ be a pointed FS-domain.  Is the
function space $[D\to E]$ necessarily continuous?
\end{question}


\begin{question}\label{q:universal-fs-targets}
Suppose that $E$ is a domain such that $[D\to E]$ is continuous for every
FS-domain $D$.  Must $E$ be a $\ll$-separating domain?
\end{question}


\end{document}